\newtheorem{theorem}{Theorem}[section]
\newtheorem{proposition}[theorem]{Proposition}
\newtheorem{corollary}[theorem]{Corollary}
\newtheorem{lemma}[theorem]{Lemma}
\theoremstyle{definition}
\newtheorem{definition}[theorem]{Definition}
\newtheorem{remark}[theorem]{Remark}
\title[Extension of Lipschitz-type operators]
{Extension of Lipschitz-type operators on Banach function spaces}
\author[ W. V. Cavalcante, P. Rueda,  E. A. S\'{a}nchez-P\'erez]{W. V. Cavalcante, P. Rueda, E. A. S\'{a}nchez-P\'{e}rez$^*$}
\address{Wasthenny V. Cavalcante\\
Departamento de Matem\'atica\\ Federal University of Pernambuco\\Av. Prof. Moraes Rego, 1235 - Cidade Universit\'aria, Recife - PE, 50670-901. Brasil}
\email{wasthenny@dmat.ufpe.br}
\address{Pilar Rueda\\
	Departamento de An\'alisis Matem\'atico\\ Universidad de Valencia  \\C/ Dr. Moliner 50, 46100 Burjassot (Valencia). Spain}
\email{pilar.rueda@uv.es}
\address{Enrique A. S\'{a}nchez P\'{e}rez\\
Instituto
Universitario de Matem\'atica Pura y Aplicada,  Universitat
Polit\`ecnica de Val\`encia\\ Camino de Vera s/n, 46022 Valencia. Spain.}
\email{easancpe@mat.upv.es}
\subjclass[2010]{Primary 26A16, 46E30;  Secondary 47H99}
\keywords{Lipschitz operator; Banach function space; integration; measure; metric space.}
\thanks{$^*$ Corresponding author.}
\thanks{The first author gratefully acknowledges the support of the CAPES (Brazil).}
\thanks{The second and the third authors gratefully acknowledge the support of the Ministerio de Ciencia, Innovaci\'on y Universidades (Spain) and FEDER under grant   MTM2016-77054-C2-1-P2. }
\date{\today}
\begin{document}

\begin{abstract}
We study extension theorems for Lipschitz-type operators acting on metric spaces and with values on spaces of integrable functions.  Pointwise domination is not a natural feature of such spaces, and so almost everywhere inequalities and other measure-theoretic notions are  introduced.
We analyze  Lipschitz type inequalities in two fundamental cases. The first concerns a.e. pointwise inequalities, while the second considers dominations involving integrals. These Lipschitz type inequalities provide the suitable frame to work with operators that take values on Banach function spaces. In the last part of the paper we use some interpolation procedures to extend our study to interpolated Banach function spaces.

\end{abstract}

\maketitle


\section{Introduction and basic definitions}

Extension of Lipschitz functions acting on subsets of  metric spaces is a relevant issue in mathematical analysis, not only because of its theoretical interest but also because of the large number of applications that have been obtained.  There are two classical extension results that are considered as 
milestones in the theory. The first one  is the McShane-Whitney theorem, which concerns real valued functions, and establishes that given a subset
$S$  of a metric space $(M,d)$ and a Lipschitz function $T : S \to \mathbb  {R}$ with Lipschitz constant $k$, there is always a Lipschitz function $ M \to \mathbb {R}$  extending $T$ and with the same Lipschitz constant $k$. 
The second one is the celebrated Kirszbraun theorem, and deals with Lipschitz maps between Hilbert spaces. It asserts that for Hilbert spaces $H$ and $K,$ a subset $U \subseteq H$ and a Lipschitz operator
$T : U \to K,$ there is another  Lipschitz operator
$\hat{T}: H \to  K$
that extends $T$ with the same Lipschitz constant (see  \cite{kir}, \cite[p.21]{sch}). It is well-known that this result is not true in the class of  Banach spaces, not even in the finite dimensional case.
Other relevant aspects of the theory that are also of interest for our research have been developed recently, such as those concerning Lipschitz functions on ``$\infty$-type" spaces as $\ell^\infty,$ $c_0$ and $C(K)$-spaces. For example, the reader can find a remarkable complete study of Lipschitz operators on $C(K)$ spaces in a series of papers by Kalton (see \cite{kal,kal2,kal3} and the references therein; see also \cite{dogkallan}). After an inspection of these works, it can be noticed that the structure of these ``$\infty$-type" spaces ---formed  by  functions (and not by classes of a.e. equal functions) and with an $\infty$-type norm---, 
constitutes a fundamental part of the arguments for obtaining extensions of  Lipschitz operators on them.  
Although some of the ideas developed in this setting can be applied also for spaces of integrable functions, we will show that it is convenient to adapt them using suitable measure theoretical notions. 
Indeed, we will show that the development of extension procedures for function-valued maps forces the use of specific Lipschitz-type inequalities taking into account the nature of the function spaces in the range. Thus, if $(\Omega, \Sigma, \mu)$ is a measure space,
we will consider ``almost everywhere pointwise domination" for spaces as $L^\infty(\mu),$ as well as ``integral domination" or ``measure domination" for spaces as $L^1(\mu)$ or $L^p(\mu).$

Therefore, in this paper we  provide extension results for Lipschitz-type operators on metric spaces of measurable functions by adapting the notion of Lipschitz map to this kind of spaces when necessary. 
We introduce a new definition of a subclass of Lipschitz maps in which some elements associated to the underlying measure space appear explicitly. In particular, if  $S$ is a subset of the metric space $(M,d)$ and $Y(\mu)$ is a Banach function space, we consider
 Lipschitz operators  $T:M \to Y(\mu)$   satisfying  domination properties of the form
$$
\| \big( T (x) - T (y)  \big) \chi_A \|_{Y(\mu)} \le \phi(A) \, d(x, y) , \quad x, y \in X, \,\,\, A \in \Sigma,
$$
where $\phi: \Sigma \to \mathbb R$ is a set function related to the measure $\mu.$

Another aspect on factorization of Lipschitz maps that will be studied here is the maximality of the extensions. That is, given an extension of a Lipschitz map, what can we say about the size of the domain? Related to this question, it makes sense to ask if it
is still possible to find a maximal extension. Thus, we will also analyze if it is possible to find a bigger complete metric space $\hat M$ such that
$M$ is dense in $\hat M$ and $T$ is still Lipschitz as a map $\hat T: \hat M \to Y(\mu),$ under the requirement  that $\hat M$ is the bigger space satisfying this property.  
These results will be presented at the beginning of the paper (Section \ref{S2}), and will be used in the rest of the sections.  The advantage of these preliminary results concerning maximality of the extension is that they do not need any structure on the range spaces. As we will show, they can be established for general Lipschitz maps between metric spaces.

Thus, we will present our results in four sections. After the Introduction, 
 Section \ref{S2} will be devoted to show two theorems on the existence of extensions of Lipschitz operators between metric spaces and the description of the maximal domain  space where the extension can be defined. We
will present our specific results for
spaces of measurable functions in Sections \ref{S3} and \ref{S4}. In Section \ref{S3}, pointwise domination will be studied, showing that this property fits with order bounded Lipschitz-type operators, or with operators with values on $\infty$-type Banach  lattices ---$\ell^\infty$ and $C(K)$---. In Section \ref{S4} we will explain our extension theorems for  Lipschitz-type operators  on general Banach function spaces, in which  domination inequalities involving the measure will be considered. All these results will be complemented by the ones obtained in Section \ref{S2}, including results on maximal factorizations to the extensions as corollaries. In order to provide tools to use the results obtained in specific cases of Banach function spaces, in Section \ref{S5} we will show how to extend Lipschitz-type inequalities to larger classes of Banach function spaces in some concrete cases using simple interpolation arguments.

\vspace{1cm}

We will use standard Banach space notation throughout the paper. $(\Omega,\Sigma,\mu)$ will be a $\sigma$-finite measure space and $(E,\rho)$ a metric space.
A Banach function space (Bfs for short) $Y(\mu)$  over $\mu$ is an (linear) ideal of the space $L^0(\mu)$  (the linear space formed by classes of $\mu$-a.e. equal measurable functions), endowed with a complete norm $\|\phantom{f}\|_{Y(\mu)}$. That is, if $f \in L^0(\mu),$ $|f| \le g$ and $g \in Y(\mu),$
then $f \in Y(\mu)$   and $\|f\|_{Y(\mu)} \le \|g\|_{Y(\mu)}.$ It also contains  characteristic functions of finite measure sets. Sometimes we will write $Y$ instead of $Y(\mu)$ in case the measure $\mu$ is clearly fixed in the context. As usual, $f \vee g$ denotes the maximum of $f$ and $g.$  A Banach function space is order continuous if for any decreasing  sequence $f_n \downarrow 0$ we have that $\lim_n \|f_n\|_{Y(\mu)}=0$.
It is well-known that, if $Y(\mu)$ is order continuous,  the dual space $Y(\mu)^*$ coincides with  its  K\"othe dual
$$
Y(\mu)'= \Big\{ g \,\, \textit{is a class of measurable functions}: \, \int_\Omega f \, g \, d \mu < \infty, \, f \in Y(\mu)  \Big\}.
$$
Duality is then represented as the integral of the pointwise product of the functions involved. The reader can find more information  in 
\cite[p.28 ff]{lint} and \cite[Ch.2]{libro}.

Regarding metric spaces, throughout the paper $(M,d)$ will represent a (non-necessarily complete) metric space, and $S$ a subset of $M$. No further requirements will be assumed on $M$ unless they are requested explicitly. Recall that a map $T:S \to Y(\mu)$ is  
 $K$-Lipschitz (or Lipschitz with constant $K$) if
for every $x,y \in S,$
\begin{equation} \label{for1}
\|T(x)- T(y)\|_{Y(\mu)} \le \, K \, d(x,y),
\end{equation}
and $K$ is the smallest constant satisfying the inequality. 
As we are concerned with range spaces of the form $Y(\mu)$, we will work with variants of the above
classical domination inequality that fit with Bfs-valued functions in a sensible way.
Our reference for defining the explicit formulas for the extensions of  these Lipschitz-type functions is the McShane-Whitney theorem for real-valued maps which states that, if
$S$  is a subset   of a metric space $(M,d)$ and $T : S \to \mathbb  R$ is a Lipschitz function with constant $K$, there is always a Lipschitz function $\tilde T: M \to \mathbb R$  extending $T$ and with the same Lipschitz constant $K$. The function
\begin{equation}  \label{forext2}
{{T^M}}(x):=\sup_{u\in S}\{T(u)-K\,d(x,u)\}, \quad x \in M,
\end{equation}
provides such an extension, and it is sometimes called the McShane extension.  The Whitney formula, given by
\begin{equation}  \label{forext3}
 T^W(x) :=\inf_{u\in S}\{T(u)+K\,d(x,u)\}, \quad x \in M,
\end{equation}
gives also such  an extension.

We refer to the classical monograph \cite{We} (second  edition) and the recently appeared book \cite{Co} for the general theory of Lipschitz functions.


\section{Extension of Lipschitz functions between metric spaces to maximal metric domains} \label{S2}

In order to introduce the notion of maximality of the Lipschitz extensions that will be treated in the paper, let us start with two general results for maps between generic metric spaces. We will show that the concept of maximality  can be formulated for general metric spaces without assuming further properties. However, we will see in the next sections that these results can be improved when more requirements on the structure of the spaces involved are added. 

The first result ensures the existence of an extension, while the second one proves that under reasonable assumptions this extension is maximal, in the sense that its domain space is the biggest metric space with some special requirements to which the Lipschitz map can be extended. Although the results will be used in the next sections under many other restrictions, we consider here the more abstract context in which they work.

Let us introduce a technical definition that will be needed for our purposes. 
Let $M$ and $N$ be metric spaces. We say that a map $j:M \to N$ is an {\em  inclusion/quotient map} if $j(M)$ is dense in $N$ and there is an equivalence relation providing equivalence classes in $M$ such that the map $j(x)=j(y)$ for $x,y$ belonging to the same equivalence class. That is, $j$ is a quotient map on a dense subspace of $N$.

Let $(M,d)$ be a  metric space and  let $(E,\rho)$ be a complete metric space. Given  a $K$-Lipschitz map  $T:M \to E$,  consider the function $d_T: M\times M\to \mathbb R$ defined by 
$$
d_T(x,y) = \frac{1}{K} \rho (T(x),T(y))
$$
for any  $x,y\in M$. Clearly, $d_T$ is a pseudo metric. We can consider the equivalence classes associated to $d_T$ given by
\[
[x] = \{ y\in M: d_T(x,y)=0 \}, \quad x\in M.
\]
 The quotient space $M^{*} = \{[x]:x\in M\}$, formed by the equivalence classes, becomes a metric space when endowed with the distance $d^{*}_T([x],[y]) = d_T(x,y)$. Define the map $i^{*}:M\to M^{*}$ by $i^{*}(x) = [x]$. Since $T$ is $K$-Lipschitz, we have that $i^{*}$ is $1$-Lipschitz. Write $M_T$ for the completion of $(M^{*},d_T^*)$ and $\overline d_T$ for the extended metric on $M_T$. Note that the natural map $\overline i: M^{*}\to M_T$ is an isometric inclusion. Defining the inclusion/quotient map $j:M\to M_T$ by $j = \overline i \circ i^{*}$, we can easily ckeck that  $j$ is $1$-Lipschitz and $\overline {j(M)}^{\overline d_T}= M_T$.

Now, define $T^{*}: M^{*}\to E$ by $T^{*}([x]) = T(x)$. Note that $T^{*}$ it is well defined, since if $y,z\in [x]$, then $d_T(y,z) = 0$ and so $\rho(T(y),T(z))=0$. Let us see that $T^{*}$ is $K$-Lipschitz. Indeed,
\begin{eqnarray*}
\rho(T^{*}([x]),T^{*}([y]))&=& \rho(T(x),T(y))\\
&=& K \, d^{*}_T([x],[y]),\quad x,y\in X.
\end{eqnarray*}
We can extend $T^{*}$ to $\overline T:M_T\to E$ by continuity, providing the factorization $T = \overline T\circ j,$ and clearly the Lipschitz constant of $\overline T$ is still $K$.

What we have proved is the following lemma:

\begin{lemma} \label{lem1}
Let $(M,d)$ be a  metric space and  let $(E,\rho)$ be a complete metric space. If $T:M \to E$ is  a $K$-Lipschitz map, then there exists a complete metric space $(M_T, \overline d_T)$, an inclusion/quotient $1$-Lipschitz map $j:M\to M_T$ and a $K$-Lipschitz map $\overline{T}:M_{T} \to E$ such that $T=\overline T\circ j$, i.e., the following diagram commutes 
\[
\xymatrix{
M  \ar[rr]^{ T} \ar@{->}[dr]_{j} & & E.\\
& {M_{T}} \ar[ur]_{\overline{T}} & }
\]
Here, $\overline d_T$ is the metric associated to the pseudo metric
$$
d_T(x,y):= \frac{1}{K} \rho( T(x), T(y)), \quad x,y \in M.
$$
\end{lemma}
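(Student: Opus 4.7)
The plan is to factor the construction into three well-separated stages: pass from $M$ to a metric quotient, complete that quotient, and then extend $T$ by uniform continuity using completeness of $E$. Nothing in the conclusion requires more than these standard moves, so the proof should largely consist of checking that the pieces fit.

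\textbf{Stage 1 (quotient).} First I would verify that $d_T(x,y):=\frac{1}{K}\rho(T(x),T(y))$ is a pseudo-metric on $M$: symmetry and the triangle inequality come for free from $\rho$, and the $K$-Lipschitz hypothesis gives $d_T(x,y)\le d(x,y)$. The relation $x\sim y$ iff $d_T(x,y)=0$ is clearly an equivalence relation, and I would check that $d_T^{*}([x],[y]):=d_T(x,y)$ is well defined on $M^{*}=M/\!\sim$ and is a genuine metric. The quotient map $i^{*}:M\to M^{*}$, $i^{*}(x)=[x]$, is then $1$-Lipschitz because $d_T^{*}([x],[y])\le d(x,y)$.

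\textbf{Stage 2 (completion).} I would let $(M_T,\overline d_T)$ be the metric completion of $(M^{*},d_T^{*})$ with canonical isometric embedding $\overline i:M^{*}\to M_T$, and put $j=\overline i\circ i^{*}$. The composition is $1$-Lipschitz, and its image is dense because $\overline i(M^{*})$ is dense in the completion while $i^{*}$ is surjective. This is exactly an inclusion/quotient map in the sense defined just before the lemma, with equivalence classes those of $d_T$.

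\textbf{Stage 3 (extension).} Define $T^{*}:M^{*}\to E$ by $T^{*}([x])=T(x)$; well-definedness is automatic since $[x]=[y]$ means $\rho(T(x),T(y))=0$. The identity
\[
\rho(T^{*}([x]),T^{*}([y]))=\rho(T(x),T(y))=K\,d_T^{*}([x],[y])
\]
shows that $T^{*}$ is $K$-Lipschitz (in fact exactly $K$-Lipschitz on its range). Since $T^{*}$ is uniformly continuous on a dense subset of $M_T$ and $E$ is complete, it admits a unique continuous extension $\overline T:M_T\to E$, and the Lipschitz estimate passes to the extension because Lipschitz inequalities are preserved under pointwise limits. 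Commutativity $T=\overline T\circ j$ is immediate from $T^{*}\circ i^{*}=T$.

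The only mildly delicate point is the final extension step: one must check that the Lipschitz constant is preserved (and not merely bounded above by some multiple of $K$). This follows from the standard fact that a Lipschitz map into a complete metric space extends to the completion with the same Lipschitz constant, via a Cauchy-sequence argument applied to both coordinates of $\overline d_T$. All remaining verifications are routine.
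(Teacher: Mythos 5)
Your proposal is correct and follows essentially the same route as the paper: the same pseudo-metric $d_T$, the same quotient $M^{*}$ with $i^{*}$, the same completion $M_T$ with $j=\overline i\circ i^{*}$, and the same extension of $T^{*}$ to $\overline T$ by uniform continuity into the complete space $E$, with the Lipschitz constant $K$ preserved. No gaps; the only point you flag as delicate (preservation of the constant under extension to the closure) is handled the same standard way in the paper.
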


 Notice that the usual completion $\overline M$ of the metric space $M$ also satisfies the lemma, and it is usually seen as the smallest completed space that contains $M$ as a dense subspace. However, we look for a maximal completed space that contains (a quotient of) $M$ as a dense subspace. This is why the above construction is necessary to obtain the maximal factorization, as showed in the next theorem. 

\begin{theorem} \label{lem2}
	In the same setting and with the same notation as in Lemma \ref{lem1}, we have that the factorization through $M_T$ and the Lipschitz operator
	$\overline{T}$ is maximal in the following sense. If there is another complete  metric space $(J, \overline \rho)$ such that
	
	\begin{itemize}
		
		\item[i)] the operator $T$ can be factored as $T= T_0 \circ i_0$, where $T_0:J \to E$ is a $K$-Lipschitz operator and $i_0:M \to J$  is a $1$-Lipschitz inclusion/quotient map,
		
		\item[ii)] $i_0(M)$ is dense in $J$.

	\end{itemize}
	then there is a $1$-Lipschitz inclusion/quotient map $i:J \to M_T$ satisfying
	
	\begin{itemize}
		\item [1)] $i\circ i_0 = j$, and
		
		\item [2)] $\overline T\circ i = T_0$.
	\end{itemize}
\end{theorem}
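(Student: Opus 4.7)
The plan is to define $i:J\to M_T$ by continuous extension from $i_0(M)$ to all of $J$. Concretely, for each $w\in J$, I would use density of $i_0(M)$ in $J$ to choose a sequence $(x_n)\subset M$ with $i_0(x_n)\to w$ in $(J,\overline\rho)$, and then set
\[
i(w) \;:=\; \lim_n j(x_n) \quad \text{in } (M_T, \overline d_T).
\]
The crucial fact that makes everything fit together is the inequality
\[
\overline d_T(j(x),j(y)) \;=\; d_T(x,y) \;=\; \tfrac{1}{K}\rho(T(x),T(y)) \;=\; \tfrac{1}{K}\rho(T_0(i_0(x)),T_0(i_0(y))) \;\le\; \overline\rho(i_0(x),i_0(y))
\]
for all $x,y\in M$, which uses only the factorization $T=T_0\circ i_0$ and the fact that $T_0$ is $K$-Lipschitz. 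From this inequality the two standard verifications are immediate: $(j(x_n))$ is Cauchy in the complete space $M_T$ (so the limit exists), and if $(y_n)$ is another sequence in $M$ with $i_0(y_n)\to w$, then $\overline d_T(j(x_n),j(y_n))\le \overline\rho(i_0(x_n),i_0(y_n))\to 0$ (so the limit is independent of the approximating sequence). Hence $i$ is well defined.

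Once $i$ is in hand, the remaining items are straightforward. For property 1), apply the definition to the constant sequence $x_n\equiv x$ to get $i(i_0(x))=j(x)$. For property 2), use continuity of $\overline T$ on $M_T$ and continuity of $T_0$ on $J$: for $w\in J$ with $i_0(x_n)\to w$,
\[
\overline T(i(w))\;=\;\lim_n \overline T(j(x_n))\;=\;\lim_n T(x_n)\;=\;\lim_n T_0(i_0(x_n))\;=\;T_0(w).
\]
That $i$ is $1$-Lipschitz follows by passing to the limit in the inequality displayed above applied to approximating sequences of any two points $w_1,w_2\in J$:
\[
\overline d_T(i(w_1),i(w_2))\;=\;\lim_n \overline d_T(j(x_n),j(y_n))\;\le\;\lim_n\overline\rho(i_0(x_n),i_0(y_n))\;=\;\overline\rho(w_1,w_2).
\]
Finally, to see that $i$ is an inclusion/quotient map: its image contains $i(i_0(M))=j(M)$, which is dense in $M_T$, so $i(J)$ is dense; and the equivalence relation on $J$ given by $w_1\sim w_2\iff i(w_1)=i(w_2)$ is a legitimate witness in the sense of the definition preceding Lemma~\ref{lem1}.

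The main technical hinge is the one displayed inequality $\overline d_T\circ(j\times j)\le \overline\rho\circ(i_0\times i_0)$; every subsequent step (existence of the limit, independence of the approximant, Lipschitz constant $1$, and commutativity of the two triangles) is a routine consequence of completeness of $M_T$ and the density of $i_0(M)$ in $J$. I do not anticipate any obstacle beyond keeping clean track of which metric is being used on which space.
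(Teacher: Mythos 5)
Your proposal is correct and follows essentially the same route as the paper's proof: the same key inequality $\overline d_T(j(x),j(y))\le \overline\rho(i_0(x),i_0(y))$ derived from $T=T_0\circ i_0$ with $T_0$ being $K$-Lipschitz, followed by defining $i$ via limits of $(j(x_n))$ and verifying 1), 2) and the Lipschitz bound by passing to limits. The only cosmetic difference is your well-definedness argument (a direct estimate on $\overline d_T(j(x_n),j(y_n))$ instead of the paper's interleaved-sequence trick), and you additionally spell out why $i$ is an inclusion/quotient map, which the paper leaves implicit.
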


\begin{proof}
Let us prove that there exists  $i:J\to M_T$. Let $z\in J$. Then by ii) there is a sequence $(x_n)\subset M$ such that $z = \overline \rho - \lim i_0(x_n)$. Therefore $(i_0(x_n))\subset J$ is a $\overline \rho$-Cauchy sequence. Note that
\begin{eqnarray*}
\overline d_T(j(x),j(y))= d_T(x,y)
&=& \frac{1}{K} \rho (T(x),T(y))\\
=\frac{1}{K} \rho (T_0\circ i_0(x),T_0\circ i_0(y))
&\leq&\overline \rho(i_0(x),i_0(y)),\quad x,y\in M.
\end{eqnarray*}
Thus $(j(x_n))$ is a $\overline d_T$-Cauchy sequence, and so there exists $w\in M_T$ such that $w = \overline d_T-\lim j(x_n)$. Define $i:J\to M_T$ by $i(z)= w$, where $z=\overline \rho-\lim i_0(x_n)$ and $w=\overline d_T-\lim j(x_n)$. We claim that $i$ is well defined. Indeed, if there is another sequence $(y_n)
\subset M$ such that 
\[
z= \overline \rho-\lim i_0(x_n) =\overline \rho-\lim i_0(y_n),
\]
then the sequence
\[
(i_0(x_1),i_0(y_1),i_0(x_2),i_0(y_2),\ldots)
\]
converges to  $z$ in $J$. Thus it is a $\overline \rho$-Cauchy sequence and we have that
\[
(j(x_1),j(y_1),j(x_2),j(y_2),\ldots)
\]
is a $\overline d_T$-Cauchy sequence. Therefore $\overline d_T-\lim j(x_n)=\overline d_T-\lim j(y_n)$. 

Let us show now $1)$. Let $x\in M$. Trivially we have $i_0(x) = \overline \rho - \lim i_0(x)$. Thus
\[
i(i_0(x))=\overline d_T-\lim j(x) = j(x).
\]
Since the inequality
\[
\overline d_T(j(x),j(y))\leq \overline \rho(i_0(x),i_0(y)),\quad x,y\in M
\]
holds, it follows that $i$ is $1$-Lipschitz.

To prove part  $2)$, let $z = \overline \rho-\lim i_0(x_n)$. Then
\begin{eqnarray*}
\overline T(i(z)) &= & \overline T(\overline d_T-\lim j(x_n))
= \rho-\lim \overline T\circ j(x_n)\\
&=& \rho-\lim T(x_n)
= \rho-\lim T_0\circ i_0(x_n)\\
&=&T_0(\overline \rho -\lim i_0(x_n))
=T_0(z),
\end{eqnarray*}
and so the result holds.
\end{proof}

\section{Extension of Lipschitz maps with values in  function spaces} \label{S3}

In this section we consider the problem of extending maps defined on metric spaces and that take values
in  lattices of functions  defined on a measure space $(\Omega,\Sigma,\mu)$; that is, spaces of classes of $\mu$-a.e equal functions. It must be said that the techniques used here are of different nature than the ones used by Kalton and the other authors that considered the problem for the case of ``$\infty$-type" spaces as explained in the introduction. The reason is that the structure of the spaces of integrable functions is meaningfully different to the case of $C(K)$ spaces. Broadly speaking, pointwise domination and norm domination are equivalent properties in the case of spaces of continuous functions, but this  is not the case for  spaces of measurable functions. This forces to adapt the definitions and to create new settings specifically constructed for the case of lattices of integrable functions.  Let us introduce some concrete definitions. We will assume that a set of functions $L$ is a lattice if
for every pair $f,g \in L,$ we have that $f \vee g \in L.$
 A subset $F(\mu)$ of $L^0(\mu)$ is a \textit{metric function space} if it is a metric space which is a lattice, and for every $f \in F(\mu)$ and $A \in \Sigma,$ $f \chi_A \in F(\mu).$ 

Our aim is to  analyze the extension procedure when maps with values in a metric function space $F(\mu)$ satisfy a Lipschitz type inequality. In this concrete setting of $F(\mu)$-valued mappings, we will consider pointwise $\mu$-a.e. variants of the inequality that defines a Lipschitz map. 

\begin{definition} If $(M,d)$ is a metric space, a map
$T: M\to F(\mu)$ is \textit{ pointwise $K$-Lipschitz $\mu$-a.e.} if
	\[
	\big| T(x)-T(y)  \big| \leq K \, d(x,y)  \quad \text{$\mu$-a.e.}
	\]
for all $x,y \in M.$ As usual, $K$ is supposed to be the infimum of all the constants satisfying the inequality. 
\end{definition}
To be more specific, 
 for each   $x,y \in M$ there is a set $A_{x,y}\in \Sigma$ with $\mu(\Omega\setminus A_{x,y})=0$ and 
$$
\big| T(x)(w) - T(y)(w) \big| \le K \, d(x,y), \mbox{ for all } w \in A_{x,y}.
$$

Dealing with classes of $\mu$-a.e. equal functions instead of functions leads to extra difficulties. It is easy to see that in this case the extension formulae (\ref{forext2}) and (\ref{forext3})  do not give necessarily  measurable functions. This is why we will have to limit the result to Lipschitz maps restricted to countable subsets.

\begin{theorem} \label{pointTh} Let $(M,d)$ be a metric space and let $S\subset M$ be a countable set. Let $F(\mu)$  be a metric 
function space  that
  is closed under translations defined by constants (that is, $f+a\in F(\mu)$ whenever $f\in F(\mu)$ and 'a' is a real constant).
If $T: S\to F(\mu)$  is a pointwise $K$-Lipschitz $\mu$-a.e. map, then there exists an extension $\hat T:M\to F(\mu)$ of $T$ that is pointwise $K$-Lipschitz $\mu$-a.e.

\end{theorem}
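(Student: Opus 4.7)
The plan is to mimic the classical McShane formula (\ref{forext2}) in a pointwise $\mu$-a.e.\ sense. Countability of $S$ is essential for two reasons: it guarantees that countable suprema of the relevant fixed representatives remain measurable, and it allows the exceptional $\mu$-null sets of all the defining inequalities to be gathered into a single $\mu$-null set. Without countability, the formula (\ref{forext2}) may fail even to define a measurable function.

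Concretely, I would enumerate $S = \{u_n\}_{n \in \mathbb N}$, fix representatives of each $T(u_n)$, and let $N := \bigcup_{n,m} N_{n,m}$, where $N_{n,m}$ is a $\mu$-null set outside of which $|T(u_n)(w) - T(u_m)(w)| \le K\, d(u_n, u_m)$ holds pointwise. Then $N$ is still $\mu$-null, and for every $w \in \Omega \setminus N$ the real map $n \mapsto T(u_n)(w)$ is $K$-Lipschitz on $S$. For each $x \in M$ I would set
\[
\hat T(x)(w) := \sup_{n \in \mathbb N}\bigl\{T(u_n)(w) - K\, d(x, u_n)\bigr\}, \quad w \in \Omega \setminus N,
\]
extended arbitrarily on $N$. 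Measurability follows as a countable supremum of measurable functions, and the extension property $\hat T|_S = T$ comes from the fact that at $x = u_m \in S$ the real-valued McShane formula on $S$ attains $T(u_m)(w)$ at $n = m$ and is bounded above by $T(u_m)(w)$ for every other $n$, thanks to the common Lipschitz inequality outside $N$.

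The step I expect to be the main obstacle is verifying that $\hat T(x) \in F(\mu)$, since a metric function space is only required to be a lattice, closed under constant translations, and closed under multiplication by characteristic functions, so countable dominated suprema are not available for free. To bridge this, I would fix an arbitrary $u_0 \in S$ and combine the a.e.\ inequality $T(u_n) \le T(u_0) + K\, d(u_n, u_0)$ with the triangle inequality $d(u_n, u_0) \le d(u_n, x) + d(x, u_0)$ to obtain
\[
T(u_n) - K\, d(x, u_n) \le T(u_0) + K\, d(x, u_0) \quad \text{$\mu$-a.e.,}
\]
simultaneously for every $n$. Matched with the obvious lower bound $\hat T(x) \ge T(u_0) - K\, d(x, u_0)$ obtained by taking a single index, this yields the sandwich $|\hat T(x) - T(u_0)| \le K\, d(x, u_0)$ $\mu$-a.e. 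Together with closure of $F(\mu)$ under constant translations and the lattice structure, this identifies $\hat T(x)$ with an element of $F(\mu)$.

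The final pointwise $K$-Lipschitz condition then follows from the standard McShane argument: for $x, y \in M$ and every $n$, the triangle inequality gives
\[
T(u_n)(w) - K\, d(y, u_n) \le T(u_n)(w) - K\, d(x, u_n) + K\, d(x, y) \le \hat T(x)(w) + K\, d(x, y)
\]
for all $w \notin N$. Taking the supremum over $n$ and interchanging the roles of $x$ and $y$ produces $|\hat T(x) - \hat T(y)| \le K\, d(x, y)$ $\mu$-a.e., which is precisely the pointwise $K$-Lipschitz $\mu$-a.e.\ condition for the extension $\hat T$.
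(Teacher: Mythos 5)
Your proposal is correct and follows essentially the same route as the paper's proof: the McShane supremum formula over the countable set $S$ with fixed representatives, a single collected $\mu$-null set to make all Lipschitz inequalities hold simultaneously, a sandwich $T(u_0)-K\,d(x,u_0)\le \hat T(x)\le T(u_0)+K\,d(x,u_0)$ combined with the lattice and constant-translation properties to get $\hat T(x)\in F(\mu)$, and the standard sup-difference estimate for the $K$-Lipschitz $\mu$-a.e.\ bound. The only differences (collecting null sets over all pairs at once rather than per point, and comparing with one fixed $u_0$ instead of an arbitrary $y\in S$) are cosmetic.
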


\begin{proof}
	By the Axiom of Choice, there exists an element $R\in \Pi_{x\in S}T(x),$ $R=(r_x)_{x\in S},$ such that for every $x$, $r_x$ is a measurable function that belongs to the equivalence class (determined by) $T(x)$. 
For each $y\in X$ consider the function 
	\begin{equation}\label{repres}
	\hat T(y)(w):= \displaystyle \sup_{x\in S}\{r_x(w)-Kd(x,y)\},\quad w\in \Omega
	\end{equation}
and  define $\hat T(y)(w)$ as its equivalence class 
(as usual in Measure Theory we write $\hat T(y)(w)$ for both, the class and a representative of the class).
Notice that under the assumption that $S$ is countable, the expression in (\ref{repres}) determines a measurable function. Indeed, for a fixed $y \in M$ we can consider the countable
set of functions $S_y=\{r_x(w)-Kd(x,y)\}.$ Of course, the supremum of such a set is a measurable function, and so $\hat T(y)$ is well defined. Let us divide the proof in two steps.

\textit{Step 1.} Let us show   that $\hat T$ extends $T$, i.e.,  if $y\in S$ then $\hat T(y)$ defined in (\ref{repres}) coincides with $T(y)$ $\mu$-a.e.
		Note that for every $x \in S$ there exists a measurable null set  $N_x(y)$ such that
	\[
	|T(y)(w)-T(x)(w)|\leq K d(x,y),\quad w\in \Omega\backslash N_x(y).
	\]
	Let $N(y):= \cup_{x\in S} N_x(y).$ Note that it is a measurable null set as  $S$ is countable. Note also that $T(x)$ is just defined $\mu$-almost everywhere for every $x \in S$ and so its value is not uniquely determined. For every $x\in S,$ consider any other element $t_x$ of the class  $T(x)$, and take the set
	\[
	M_x = \{w\in \Omega: t_x(w)\neq r_x(w)\}.
	\]
	It is clearly is a measurable null set, and so  $M = \cup_{x\in S}M_x$ is also a  measurable null set. If $w\in \Omega\setminus (N(y)\cup M)$, then
	\[
	t_y(w)\geq t_x(w)-Kd(x,y) = r_x(w)-Kd(x,y).
	\]
	Consequently, for such a $w$ we have
	\[
	t_y(w)\geq \sup_{x\in S}\{t_x(w)-Kd(x,y)\} = \sup_{x\in S}\{r_x(w)-Kd(x,y)\}.
	\]
	Since $y\in S$, we have $T(y)(w)  = r_y(w)=r_y(w)-Kd(y,y)$ $\mu$-a.e. Therefore ---again using an abuse of notation identifying the function with its equivalence class---, we get
	\[
	\sup_{x\in S}\{r_x(w)-Kd(x,y)\}=T(y)(w).
	\]

\textit{Step 2.}  Let us now see that $\hat T(x)\in F(\mu)$ for all $x\in M$. Let $x\in M$ and $y\in S$. We claim that
	\[
	|\hat T(x)(w)-T(y)(w)|\leq Kd(x,y)\quad \text{$\mu$-a.e.}
	\]
	Indeed, taking any representatives for $\hat T(x)$ and $T(y),$ ---we again write $\hat T(x)$ and $T(y)$ for them---, for all $w$ except in a $\mu$-null set we have
	\begin{eqnarray*}
		|\hat T(x)(w)-T(y)(w)|&=&|\sup_{z\in S}\{r_z(w)-K d(z,x)\}-\sup_{z\in S}\{r_z(w)-K d(z,y)\}|\\
		&\leq&\sup_{z\in S}K|d(z,x)-d(z,y)| \leq Kd(x,y)\quad \text{$\mu$-a.e.}
	\end{eqnarray*}
	Therefore,
	\[
	-Kd(x,y)\leq \hat T(x)(w)-T(y)(w)\leq Kd(x,y).
	\]
	Since $y\in S$, $T(y)\in F(\mu)$. Define $h_1(w) = T(y)(w)-Kd(x,y)$ and $h_2(w) = T(y)(w)+Kd(x,y)$, we have
	\[
	h_1(w) \leq \hat T(x)(w)\leq h_2(w).
	\]
	Consequently,
	\[
	|\hat T(x)(w)|\leq h(w)
	\]
	where $h(w) = (h_1 \vee h_2)(w)= \max\{h_1(w),h_2(w)\}$. Since $F(\mu)$ is a lattice and $h\in F(\mu)$, we conclude that  $\hat T(x)\in F(\mu)$ for all $x\in M$.

Since we have already proved in Step 1 that $\hat T(y) = T(y)$ for all $y\in S$ we get that $\hat T$ is a extension of  $T$.
	
	Step 3. Finally, let us show that $\hat T$ is pointwise $K$-Lipschitz $\mu$-a.e. Indeed,  if $x,y\in M$, then
	\begin{eqnarray*}
		|\hat T(x)(w)-\hat T(y)(w)|&=&|\sup_{z\in S}\{r_z(w)-Kd(z,x)\}-\sup_{z\in S}\{r_z(w)-Kd(z,y)\}|\\
		&\leq&\sup_{z\in S}K|d(z,x)-d(z,y)| \leq Kd(x,y)\quad \text{$\mu$-a.e.}
	\end{eqnarray*}
This finishes the proof.
\end{proof}

\begin{remark}  \label{remLp}

Let us write some examples to which the result above can be applied. Assume that we have a pointwise $K$-Lipschitz map $\mu$-a.e. $T$ from a countable subset $S$ of a metric space $(M,d)$ into a Banach function space $Y(\mu).$  Note that, in particular, Banach function spaces are metric function spaces.
 Then by Theorem \ref{pointTh} we have an extension $\hat T$ to the whole
space $M$ that is pointwise $K$-Lipschitz $\mu$-a.e. So, 
\begin{equation}\label{ineqdef}
|T(x)-T(y)|\leq Kd(x,y), \ \ \ \mu-\mbox{a.e.}
\end{equation}
for all $x,y\in M$. Straightforward arguments show that this implies the following Lipschitz type properties, depending on who is the space $Y(\mu).$

\begin{itemize}

\item[(1)] If $Y(\mu)=L^1(\mu)$ and $\mu$ is a finite measure, then the $\mu$-a.e. domination of $T$ given in (\ref{ineqdef})  implies that
	\[
	\|\hat T(x)- \hat  T(y)\|_{L^1(\mu)} = \int_\Omega | \hat  T(x)- \hat  T(y)| d \mu \le K\mu(\Omega)d(x,y),
	\]
	for all $x,y\in M$, i.e., $ \hat  T:M\to L^1(\mu)$ is a $K'$-Lipschitz operator with constant $K'\leq K \mu(\Omega)$.
	
\item[(2)] If $Y(\mu)=L^{\infty}(\mu)$ and $\mu$ is a $\sigma$-finite measure, then (\ref{ineqdef}) gives that 
	\[
	\| \hat  T(x)- \hat  T(y)\|_{L^{\infty}(\mu)}\leq K d(x,y),
	\]
	for all $x,y\in M$, i.e., $ \hat  T:M\to L^{\infty}(\mu)$ is a $K$-Lipschitz map.
	
\item[(3)] If $Y(\mu)=L^{p}(\mu)$ and $\mu$ is a finite measure, then
	\[
	\| \hat  T(x)- \hat  T(y)\|_{L^{p}(\mu)}\leq K \mu(\Omega)^{1/p} d(x,y),
	\]
	for all $x,y\in M$, i.e., $\hat  T:M\to L^{p}(\mu)$ is a  $K'$-Lipschitz operator with constant $K'\leq K \mu(\Omega)^{1/p}$.

\end{itemize}
\end{remark}

\begin{corollary}  
Let $S$ be a countable subset of a metric space $(M,d)$   and let $\mu$ be a finite measure. Let $1 \le p \le \infty.$ Consider a  pointwise $K$-Lipschitz $\mu$-a.e.  map $T:S \to L^p(\mu).$

 Then $T$ can be extended to $M$ in such a way that the extension $\hat T$ factors through an inclusion/quotient $1$-Lipschitz map $j$ and a $K'$-Lipschitz map $\overline{T}:M_{{\hat T}} \to L^p(\mu) $ with constant $K' \leq \mu(\Omega)^{1/p} K$, as
\[
\xymatrix{S \hookrightarrow
M \,\,\,\, \ar[rr]^{ \hat T} \ar@{->}[dr]_{j} & & L^p(\mu),\\
& {M_{\hat T}} \ar[ur]_{\overline{T}} & }
\]
where $(M_{\hat T}, \overline d_{\hat T})$ is a complete  metric  space in which $j(M)$ is dense, and $\overline d_{\hat T}$ is the metric associated to the pseudo metric
$$
d_{\hat T}(x,y):= \frac{1}{\mu(\Omega)^{1/p}  K} \, \big\| {\hat T}(x) - {\hat T}(y)\big\|_{L^p(\mu)}, \quad x,y \in M.
$$
Moreover, this extension/factorization is optimal in the sense of Theorem  \ref{lem2}.

\end{corollary}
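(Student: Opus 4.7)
The plan is to combine three results already established in the paper: Theorem \ref{pointTh} supplies an extension that preserves the pointwise a.e.\ Lipschitz property, Remark \ref{remLp} turns this pointwise control into a norm Lipschitz inequality on $L^p(\mu)$, and then Lemma \ref{lem1} together with Theorem \ref{lem2} provides the maximal factorization through a complete metric space. No new machinery is needed; the proof is a direct concatenation of these three ingredients.

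First I would verify the hypotheses of Theorem \ref{pointTh}. Since $\mu$ is finite, every real constant lies in $L^p(\mu)$, so $L^p(\mu)$ is closed under translations by constants; being a Banach function space, it is in particular a metric function space (a lattice, closed under multiplication by characteristic functions of measurable sets). Theorem \ref{pointTh} then produces an extension $\hat T:M\to L^p(\mu)$ of $T$ that is pointwise $K$-Lipschitz $\mu$-a.e. From the a.e.\ inequality $|\hat T(x)-\hat T(y)|\le K\,d(x,y)$, a straightforward integration (as spelled out in Remark \ref{remLp}, with the convention $\mu(\Omega)^{0}=1$ covering the case $p=\infty$) yields
\[
\|\hat T(x)-\hat T(y)\|_{L^p(\mu)}\le \mu(\Omega)^{1/p} K\, d(x,y),\qquad x,y\in M,
\]
so $\hat T$ is $K'$-Lipschitz as a map between metric spaces, with $K'\le \mu(\Omega)^{1/p} K$.

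Since $L^p(\mu)$ is complete, I would then apply Lemma \ref{lem1} to $\hat T$ using the admissible Lipschitz constant $\mu(\Omega)^{1/p} K$ (rather than the possibly smaller optimal constant). This choice is exactly what places the factor $\mu(\Omega)^{1/p} K$ in the denominator of the pseudo metric $d_{\hat T}$ displayed in the statement. The output of Lemma \ref{lem1} is therefore precisely what is claimed: a complete metric space $(M_{\hat T},\overline d_{\hat T})$, a $1$-Lipschitz inclusion/quotient map $j:M\to M_{\hat T}$ with dense image, and a Lipschitz extension $\overline T:M_{\hat T}\to L^p(\mu)$ of constant at most $\mu(\Omega)^{1/p} K$ satisfying $\hat T=\overline T\circ j$. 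The asserted optimality of this factorization is then exactly Theorem \ref{lem2} applied in this setting.

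The proof therefore reduces to bookkeeping; the only point requiring a little care is to invoke Lemma \ref{lem1} with the \emph{upper bound} $\mu(\Omega)^{1/p} K$ rather than with the sharpest Lipschitz constant of $\hat T$, so that the pseudo metric has the precise form written in the statement. This is legitimate because Lemma \ref{lem1} holds for any admissible Lipschitz constant of the given map. Apart from this observation, there is no substantive obstacle.
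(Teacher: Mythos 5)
Your proposal is correct and follows essentially the same route as the paper: extend via Theorem \ref{pointTh}, convert the a.e.\ pointwise bound into the $L^p$ norm bound $\mu(\Omega)^{1/p}K$ via Remark \ref{remLp}, and then apply Lemma \ref{lem1} and Theorem \ref{lem2} for the maximal factorization. Your extra remarks (checking the translation hypothesis of Theorem \ref{pointTh} and invoking Lemma \ref{lem1} with the upper bound $\mu(\Omega)^{1/p}K$ so the pseudo metric has the stated form) are sensible bookkeeping the paper leaves implicit.
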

\begin{proof}
If $T: S\to L^p(\mu)$  is a pointwise $K$-Lipschitz $\mu$-a.e. map, we have by Theorem \ref{pointTh}
that there exists an extension $\hat T:M\to L^p(\mu)$ of $T$ that is pointwise $K$-Lipschitz $\mu$-a.e.
Remark \ref{remLp} gives that $\hat T$ is also $\mu(\Omega)^{1/p} K$-Lipschitz, and so we can apply
Lemma  \ref{lem1} and Theorem \ref{lem2} to obtain the result.

\end{proof}

Let us mention here some particular facts regarding the reference papers on spaces with an $\infty$-type metric structure that we have commented in the introduction, that is $C(K)$-spaces, $\ell^\infty,$ $c_0$...
Concretely, for the case of discrete measure spaces the simplicity of the structure of measurable sets provides direct ways of proving the existence of extension for Lipschitz maps. In this case, $\mu$-a.e. properties and pointwise properties coincide, and we do not need any assumption on the cardinality of the set $S$. Also, there are no restrictions regarding finiteness ($\sigma$-finiteness) of the measure. Also, pointwise domination and norm domination are the same things in these spaces. Both facts together allow to identify the $\mu$-a.e. Lipschitz type property used in Theorem \ref{pointTh} and the classical Lipschitz property. Let us finish this section by illustrating these facts.

\begin{remark}
	Let $(M,d)$ be a metric space and $I$ an index set. Let $T:S\to \ell^{\infty}(I)$ be a $K$-Lipschitz map, where $S\subset M$. Then there exists an extension $\hat{T}:M\to \ell^{\infty}(I)$ of $T$ that is also a  $K$--Lipschitz map.
\end{remark}
\begin{proof}
Let us write $(T_i)_{i\in I}$ for the ``coordinate decomposition" of $T$. Then
	\[
	|T_i(x)-T_i(y)|\leq \|T(x)-T(y)\|_{\infty}\leq  K d(x,y),
	\]
	for all $x,y\in S$ and all $i\in I$. Consequently $T_i:S\to \mathbb{R}$ is a Lipschitz map with Lipschitz constant $Lip(T_i)\leq K$. By the McShane--Whitney Theorem, there exists a Lipschitz map $\hat{T}_i:M\to \mathbb{R}$  such that $\hat{T}_i|_{S}\equiv T_i$ and $Lip(\hat{T}_i) = Lip(T_i)\leq K$. Fix $x_0\in S$ and let $x\in M$. Then
	\[
	|\hat{T}_i(x)|\leq |\hat{T}_i(x)-\hat{T}_i(x_0)|+|T_i(x_0)|\leq K d(x,x_0)+\|T(x_0)\|_{\ell^\infty(I)}.
	\]
	Thus $(\hat{T}_i(x))_{i\in I}\in \ell^{\infty}(I)$. Define $\hat{T}:M\to\ell^{\infty}(I)$ by $\hat{T}(x) = (\hat{T}_i(x))_{i\in I}$. Note that $\hat{T}|_{S}\equiv T$ and
	\[
	|\hat{T}_i(x)-\hat{T}_i(y)|\leq K d(x,y),
	\]
	for all $x,y\in M$ and all $i\in I$. We have shown that $\hat{T}$ is a $K'$--Lipschitz map with constant $K'\leq K$.  However $K=K'$ as $\hat T$ is an extension of $T$.
\end{proof}

%

\section{ Measure-type domination and extension properties for Bfs-valued Lipschitz maps  } \label{S4}

In this section we study extension of maps satisfying  Lipschitz-type dominations
that involve measure-type theoretical elements. We restrict our attention to the case of operators whose range is a Banach function space, a particular case of the metric function spaces that have been considered in the previous section. In order to formalize the measure theoretical notions that are needed, we will introduce a set function $\phi$ in the domination equation.
Let $(\Omega, \Sigma, \mu)$ be a finite measure space and let $Y(\mu)$ be a Banach function space. Consider a subset $S$ of a metric space $(M,d).$


\begin{definition}
Let $\phi: \Sigma \to \mathbb R^+$ be  an increasing  bounded set function, that is, a set function satisfying that
for every $A,B \in \Sigma$ such that $B \subseteq A$, $\phi(B) \le \phi(A),$ and
$\sup_{A \in \Sigma} \phi(A) < \infty.$ 
We say that a Lipschitz map $T:M \to Y(\mu)$ is $\phi$-Lipschitz if
$$
\| (T(x)-T(y)) \chi_A \|_{Y(\mu)} \le  \phi(A) d(x,y) 
$$
for all $x,y \in M$ and all $A\in \Sigma$.
\end{definition}

The main examples in this section are related to functions $\phi$ that are given by norms of Banach function spaces $Z(\mu)$ over the same  measure $\mu.$ That is, we will consider functions $\phi$ as
$$
\phi(A):= K \| \chi_A\|_{Z(\mu)}, \quad A \in \Sigma,
$$
for a constant $K>0,$ that is supposed to be minimal as usual.
We will  say in this case that  \textit{$T$ is a $Z$-Lipschitz map with constant $K$ from $M$ to $Y(\mu)$ }. In particular, if $Z(\mu)=L^1(\mu)$ and $\phi(A)=\|\chi_A\|_{L^1(\mu)}=\mu(A)$, $A\in \Sigma$, we will say that $T:M\to Y(\mu)$ is $\mu$-Lipschitz with constant $K$ if there is $K'>0$ such that 
$$
\|(T(x)-T(y))\chi_A\|_{Y(\mu)}\leq K'\|\chi_A\|_{L^1(\mu)} d(x,y)
$$
for all $x,y\in M$ and all $A\in \Sigma$, and $K$ is the infimum of such constants.

Note that $\phi$ is bounded when the measure $\mu$ is finite, which will be  a natural assumption through this section.
We will need also some measure-related notions. 

\begin{definition}
Let $Y(\mu)$ be a Banach function space and $\nu:\Sigma \to \mathbb R$ a countably additive measure. We define the  $Y$-variation of $\nu$ by
$$
|\nu|_{Y}:=\sup_{\sum_{i=1}^n \alpha_i \chi_{A_i}  \in B_{Y'}}\Big| \sum_{i=1}^n \alpha_i \nu(A_i) \Big|.
$$
\end{definition}
The reader can find some applications of this notion for general vector measures in \cite[\S 4]{caldelsan} (see also \cite{blascalsan} and the references therein for more information about).

\begin{remark}
The space $L^1(\mu)$ can be endowed with the norm given by the semivariation of the measure that defines each function in it. Indeed, take $f \in L^1(\mu)$ and consider the measurable set 
$$
A_f=\{  w \in \Omega: f(w)>0\}
$$
and the set $A^c_f= \Omega \setminus A_f.$
Then
$$
\|f\|_{L^1(\mu)} = \int_{A_f} f d \mu - \int_{A^c_f} f d \mu \le 2 \max \Big\{ 
\int_{A_f} f d \mu,  \int_{A^c_f} - f d \mu \Big\} 
$$
$$
\le 2 \sup_{A \in \Sigma} 
\Big| \int_A f d \mu \Big| \le 2  \|f\|_{L^1(\mu)}.
$$
In the next result, we will use the equivalent norm $\| \cdot \|_{L^1(\mu),0},$
that is defined as
$$
\| f \|_{L^1(\mu),0} :=  \sup_{A \in \Sigma} 
\Big| \int_A f d \mu \Big|, \quad f \in L^1(\mu). 
$$
We have just proved that $\| \cdot \|_{L^1(\mu)} \le 2 \| \cdot \|_{L^1(\mu),0} \le 2\| \cdot \|_{L^1(\mu)}.$
\end{remark}

Recall that, if $\nu$ is a finite (real) measure, its semivariaton (in $\Omega$) is defined 
by 
$$
|\| \nu \||= \sup_{B \in \Sigma } |\nu(B)|,
$$
and its variation by $| \nu |= \sup_{g \in B_{L^\infty}(\nu)} |\int g d \nu|.$

\begin{proposition} \label{medL1} Let $M$ be a metric space and let $S$ be a subset of $M$.
Let $\phi: \Sigma \to \mathbb R^+$ be  an increasing and  bounded set function. Suppose that $T: S \to L^1(\mu)$  ---where $L^1(\mu)$ is endowed with the norm $\| \cdot\|_{L^1(\mu),0}$--  is a $\phi$-Lipschitz map. The following statements hold.
\begin{itemize}

\item[(1)] Suppose that for each $x \in M$  the set function  $\nu_x: \Sigma \to \mathbb R$ given by
$$
\nu_x(A):= \sup_{y \in S} \Big\{ \int_A T(y) d \mu - \phi(A) d(x,y) \Big\}, \quad A \in \Sigma,
$$
is a $\mu$-continuous  (countably additive) measure. Then $T$ admits a $\phi$-Lipschitz extension to $M$ into $L^1(\mu)$.

\item[(2)]  Conversely, if $T$ can be extended to all the space $M$ as a $\phi$-Lipschitz map, then for each $x \in M$
the set function
$$
\hat{\nu}_x(A):= \sup_{y \in M} \Big\{ \int_A T(y) d \mu - \phi(A) d(x,y) \Big\}, \quad A \in \Sigma,
$$
is a $\mu$-continuous  (countably additive) measure. Moreover, in this case
$$
|\|\hat \nu_x\|| = \|T(x)\|_{L^1(\mu),0}, \quad 
|\hat \nu_x| = \|T(x)\|_{L^1(\mu)}, \quad \text{and} \quad | \hat \nu_x(A) - \hat \nu_y(A)| \le \phi(A) d(x,y)
$$
for all $x,y \in M$ and $A \in \Sigma.$

\end{itemize}

\end{proposition}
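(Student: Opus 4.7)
The plan is to use the Radon--Nikod\'ym theorem as the main bridge between set-function data and $L^{1}$-densities. In part (1) the hypothesis that each $\nu_{x}$ is a $\mu$-continuous countably additive (real) measure delivers, via Radon--Nikod\'ym, a unique density $\hat T(x)\in L^{1}(\mu)$ with $\nu_{x}(A)=\int_{A}\hat T(x)\,d\mu$ for every $A\in\Sigma$. The whole argument then reduces to showing (i) $\hat T(x)=T(x)$ for $x\in S$ and (ii) $\hat T$ is $\phi$-Lipschitz; conversely in (2), I expect the same recipe, now applied with $M$ in place of $S$, to force $\hat\nu_{x}$ to coincide with the measure with density $\hat T(x)$, after which the three identities are book-keeping.

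For step (i) I would take $y=x$ in the supremum, which contributes $\int_{A}T(x)\,d\mu$. For every other $y\in S$, the $\phi$-Lipschitz hypothesis with respect to $\|\cdot\|_{L^{1}(\mu),0}$ gives
\[
\Bigl|\int_{A}(T(y)-T(x))\,d\mu\Bigr|\le\bigl\|(T(y)-T(x))\chi_{A}\bigr\|_{L^{1}(\mu),0}\le\phi(A)d(x,y),
\]
so $\int_{A}T(y)\,d\mu-\phi(A)d(x,y)\le\int_{A}T(x)\,d\mu$ and the supremum is attained at $y=x$; hence $\nu_{x}(A)=\int_{A}T(x)\,d\mu$, forcing $\hat T(x)=T(x)$ $\mu$-a.e. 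For step (ii), for $x,x'\in M$ and $y\in S$ I would exploit $d(x,y)\ge d(x',y)-d(x,x')$ inside the supremum to obtain
\[
\int_{A}T(y)\,d\mu-\phi(A)d(x,y)\le\Bigl(\int_{A}T(y)\,d\mu-\phi(A)d(x',y)\Bigr)+\phi(A)d(x,x').
\]
Taking the supremum over $y\in S$ and swapping the roles of $x$ and $x'$ yields $|\nu_{x}(C)-\nu_{x'}(C)|\le\phi(C)d(x,x')$ for every $C\in\Sigma$. Applied to $C=A\cap B$ and using monotonicity of $\phi$,
\[
\bigl\|(\hat T(x)-\hat T(x'))\chi_{A}\bigr\|_{L^{1}(\mu),0}=\sup_{B\in\Sigma}\Bigl|\int_{A\cap B}(\hat T(x)-\hat T(x'))\,d\mu\Bigr|\le\phi(A)d(x,x'),
\]
which is exactly the $\phi$-Lipschitz property of $\hat T$.

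Part (2) is then essentially a verification. The same ``sup-attained-at-$y=x$'' calculation, now with $S$ replaced by $M$ and $T$ by its $\phi$-Lipschitz extension $\hat T$, shows $\hat\nu_{x}(A)=\int_{A}\hat T(x)\,d\mu$, so $\hat\nu_{x}$ is automatically $\mu$-continuous and countably additive. The semivariation formula follows immediately from $|\|\hat\nu_{x}\||=\sup_{A\in\Sigma}|\hat\nu_{x}(A)|=\sup_{A}|\int_{A}\hat T(x)\,d\mu|=\|\hat T(x)\|_{L^{1}(\mu),0}$; the variation identity $|\hat\nu_{x}|=\int|\hat T(x)|\,d\mu=\|\hat T(x)\|_{L^{1}(\mu)}$ is the standard computation for a signed measure absolutely continuous with respect to $\mu$ (the Hahn decomposition separates $\{\hat T(x)>0\}$ and $\{\hat T(x)<0\}$); and the Lipschitz inequality for $\hat\nu_{x}$ is
\[
|\hat\nu_{x}(A)-\hat\nu_{y}(A)|=\Bigl|\int_{A}(\hat T(x)-\hat T(y))\,d\mu\Bigr|\le\bigl\|(\hat T(x)-\hat T(y))\chi_{A}\bigr\|_{L^{1}(\mu),0}\le\phi(A)d(x,y).
\]

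The main obstacle will be the ``sup-attained-at-$y=x$'' identity in step (i), because it hinges on translating the $\phi$-Lipschitz inequality, which is stated using the supremum-type norm $\|\cdot\|_{L^{1}(\mu),0}$, into the concrete numerical bound $|\int_{A}(T(y)-T(x))\,d\mu|\le\phi(A)d(x,y)$ for the particular set $A$ appearing in the supremum. Once that translation is isolated, Radon--Nikod\'ym does the real work and the rest of the proof is simply propagating the $\phi$-Lipschitz inequality through the McShane/Whitney-style supremum together with monotonicity of $\phi$.
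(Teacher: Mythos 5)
Your proposal is correct and follows essentially the same route as the paper: Radon--Nikod\'ym supplies the density $\hat T(x)$, the ``supremum attained at $y=x$'' computation (using $|\int_A(T(y)-T(x))\,d\mu|\le\|(T(y)-T(x))\chi_A\|_{L^1(\mu),0}\le\phi(A)d(x,y)$) gives the extension property, and the difference-of-suprema estimate combined with monotonicity of $\phi$ on sets $B\subseteq A$ gives the $\phi$-Lipschitz bound, with part (2) verified by the same attained-supremum identity. The only cosmetic difference is that you first prove $|\nu_x(C)-\nu_{x'}(C)|\le\phi(C)d(x,x')$ for all $C$ and then specialize to $C=A\cap B$, whereas the paper runs the estimate directly inside the supremum over $B\subseteq A$; the content is identical.
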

\begin{proof}
Let us prove (1).  Let $x \in M$. Since $\nu_x$ is a (countably additive) measure that is $\mu$-continuous, we have by the Radon-Nikodym Theorem that there is a function $h_x \in L^1(\mu)$ such that
$\int_A h_x d\mu = \nu_x(A)$ for all $A \in \Sigma.$ Define $\hat T(x):= h_x$ for every $x \in M.$  Let us first see that $\hat T$ is an extension of $T$. Take $x\in S$. Clearly 
$$
\int_A T(x)\, d\mu\leq \nu_x(A)=\int_Ah_x\, d\mu, \ \ A\in \Sigma.
$$
To see the converse inequality, for any $y\in S$ we have 
$$
\Big| \int_A T(y) d \mu - \int_A T(x) d \mu \Big| \le 
\big\| (T(y)-T(x)) \chi_A \big\|_{L^1(\mu)} \le \,\,
\phi(A) d(x,y),
$$
and so
$$
\int_A T(y) d \mu - \int_A T(x) d \mu \le  \phi(A) d(x,y),
$$
what implies 
$$
\int_A T(y) d \mu - \phi(A)d(x,y) \le \int_A T(x) d \mu.
$$
Then,
$$
  \nu_x(A)  =  \sup_{y \in S} \Big\{ \int_A T(y) d \mu - \phi(A) d(x,y) \Big\} \le \int_A T(x) d \mu.
$$
Hence, from both inequalities,
$$
\int_A h_x\, d\mu=\int_A T(x)\, d\mu
$$
for all $A\in \Sigma$. Thus, $\hat T(x)=h_x=T(x)$ $\mu$-a.e., for all $x\in S$.

Now we need to prove that the extension $\hat T$, is $\phi$-Lipschitz.   Let $x,y \in M.$ Fix $A \in \Sigma$. Then,
$$
\big\|(T(x) - T(y)) \chi_A \big\|_{L^1(\mu),0} \le \sup_{\Sigma \ni B \subseteq A}
\big| \int_B \hat T(x) d \mu - \int_B \hat T(y)  d\mu \big|=
  \sup_{\Sigma \ni B \subseteq A}
\big| \nu_x(B)- \nu_y(B) \big|
$$
$$
=  \sup_{\Sigma \ni B \subseteq A} \Big(
  \sup_{z \in S} \Big\{ \int_B T(z) d \mu - \phi(B) d(x,z) \Big\} -
 \sup_{v \in S} \Big\{ \int_B T(v) d \mu - \phi(B) d(y,v) \Big\} \Big)
$$
$$
\le
 \sup_{\Sigma \ni B \subseteq A} \Big(
  \sup_{w \in S} \Big\{   \int_B T(w) d \mu - \phi(B) d(x,w)  -
  \int_B T(w) d \mu - \phi(B) d(y,w)     \Big\}  \Big)
$$
$$
=
 \sup_{\Sigma \ni B \subseteq A}   \,
\sup_{w \in S}   \, \phi(B)  (d(y,w)  -
   d(x,w) )   \le  \, \phi(A) \, d(x,y).
$$
Therefore, the extension given is $\phi$-Lipschitz too.

For part (2), fix $x \in M.$ Since we have that $T$ is defined in all $M,$ we also have that for $A \in \Sigma$
$$
\hat \nu_x(A) = \sup_{y \in M} \Big\{ \int_A T(y) d \mu - \phi(A) d(x,y) \Big\} \ge \int_A T(x) d \mu - \phi(A) d(x,x) = \int_A T(x) d \mu.
$$
On the other hand, for every $y \in M$ we have also that
$$
\Big| \int_A T(y) d \mu - \int_A T(x) d \mu \Big| \le \phi(A) d(x,y),
$$
and so
$$
\int_A T(y) d \mu - \int_A T(x) d \mu \le  \phi(A) d(x,y),
$$
what implies for all $y \in M$
$$
\int_A T(y) d \mu - \phi(A)d(x,y) \le \int_A T(x) d \mu.
$$
Therefore,
$$
\hat  \nu_x(A)  =  \sup_{y \in M} \Big\{ \int_A T(y) d \mu - \phi(A) d(x,y) \Big\} \le \int_A T(x) d \mu.
$$
Consequently, $\hat  \nu_x(A)= \int_A T(x) d \mu$ for all $A \in \Sigma$. A simple look to the definition of the semivariation  shows that $|\|\hat \nu_x\|| = \|T(x)\|_{L^1(\mu),0}.$  Since by hypothesis $T(x) \in L^1(\mu)$, we have indeed that $\nu_x$ is a
countably additive measure. Moreover,
$$
|\hat \nu_x |_{L^1(\mu)} =  \sup_{g \in B_{L^\infty(\mu)}} \left| \int g d \hat\nu_x\right|=
\sup_{g \in B_{L^\infty(\mu)}} \left| \int g T(x) d \mu\right| =
 \| T(x)\|_{L^1(\mu)}.
 $$
Finally, if $y \in M$ and $A \in \Sigma,$ we also have
$$
 | \hat \nu_x(A) - \hat \nu_y(A)|= \left| \int_A T(x) d\mu - \int_A T(y) d \mu\right|
\le \| (T(x)-T(y)) \chi_A\|_{L^1(\mu)} \le \phi(A) d(x,y).
$$

\end{proof}

\begin{remark}
The main situation where  Proposition \ref{medL1} can be applied, is given for functions $\phi$ defined by means of a norm associated to the space $L^1(\mu).$ There are two canonical cases.

\begin{itemize}

\item[(1)] The case $\phi_1(A):= K\mu(A),$ that is, $\phi_1(A):= K\|\chi_A\|_{L^1(\mu)},$ $A \in \Sigma,$ for some constant $K>0$; the extension provided by Proposition \ref{medL1} preserves the average variation, that is, the original $T$ satisfies that is $\phi_1$-Lipschitz, i.e., $T$ is $\mu$-Lipschitz, if and only if
$$
\sup_{A \in \Sigma, \, \mu(A) >0} \frac{\| (T(x)-T(y)) \chi_A \|_{L^1(\mu)}}{\mu(A)} \le K d(x,y), \quad x,y \in M.
$$

\item[(2)] The ``dual" case that consists of considering the function given by $\phi_\infty(A):= K \|\chi_A\|_{L^\infty(\mu)} =K,$
 for a constant $K>0,$ and for all $A \in \Sigma$ such that $\mu(A)>0,$ and $0$ otherwise. This gives the classical Lipschitz property. Indeed, $T$ is
$\phi_\infty$-Lipschitz if and only if
$$
\|T(x)-T(y)\|_{L^1(\mu)} =
\sup_{A \in \Sigma} \| (T(x)-T(y)) \chi_A \|_{L^1(\mu)} \le  \phi_\infty(A) d(x,y) =
K d(x,y),
$$
for $x,y \in M$.
That is, the original requirement for $T$ is that it is Lipzchitz with constant $K$. Note that for $x \in M,$ the function $\nu_x$ is given in this case by
$$
\nu_x(A) = \sup_{y \in S} \Big\{ \int_A T(y) d \mu - K d(x,y) \Big\}, \quad A \in \Sigma.
$$

\end{itemize}

\end{remark}

Let us show in the next result how we can adapt the extension given in Proposition \ref{medL1} for operators having values in a general Banach function space $Y(\mu).$

\begin{theorem}  \label{propmu}
Let $\mu$ be a finite measure.
Let $Y(\mu)$ be an order continuous Banach function space such that simple functions are dense in its dual. Let $(M,d)$ be a metric space and $S \subset M$
 and let $T: S \to Y(\mu)$ be a $\mu$-Lipschitz map with constant $K$. Suppose that
for every $x \in M,$  the set function  $\nu_x: \Sigma \to \mathbb R$ given by
$$
\nu_x(A):= \sup_{y \in S} \Big\{ \int_{A} T(y) d \mu - K \mu(A) d(x,y) \Big\},
$$
is a $\mu$-continuous  (countably additive) measure with finite $Y$-variation.
  Then $T$ admits a $Y$-Lipschitz extension with constant $K$ to $M$ into $Y(\mu)$.


\end{theorem}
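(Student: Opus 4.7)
The plan is to mirror Proposition \ref{medL1} but produce the density of $\nu_x$ directly inside $Y(\mu)$ instead of only in $L^1(\mu)$, and then transfer the $\mu$-type domination of the differences $\nu_x-\nu_y$ to a $Y$-Lipschitz estimate through K\"othe duality. For each $x\in M$, since $\nu_x$ has finite $Y$-variation, the map $\Lambda_x\big(\sum_i \alpha_i\chi_{A_i}\big):=\sum_i\alpha_i\nu_x(A_i)$ is a bounded linear functional on simple functions in $Y'(\mu)$, of norm at most $|\nu_x|_Y$; by the hypothesis that simple functions are dense in $Y'(\mu)$ it extends uniquely to a continuous $\Lambda_x:Y'(\mu)\to\mathbb R$. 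In parallel, the $\mu$-continuity of $\nu_x$ and the Radon-Nikodym theorem produce a density $h_x\in L^1(\mu)$ with $\nu_x(A)=\int_A h_x\,d\mu$, and automatically $\Lambda_x(s)=\int s\,h_x\,d\mu$ for every simple $s$. Truncating to $h_x^n:=h_x\chi_{\{|h_x|\le n\}}\in L^\infty(\mu)\subset Y(\mu)$ (the inclusion is valid because $\mu$ is finite) and combining the bound on $\Lambda_x$ with the K\"othe dual representation of the $Y$-norm, one obtains $\|h_x^n\|_{Y(\mu)}\le|\nu_x|_Y$ uniformly in $n$; a Fatou-type monotone-convergence argument then places $h_x$ in $Y(\mu)$, and I set $\hat T(x):=h_x$.

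That $\hat T$ extends $T$ can be checked exactly as in Proposition \ref{medL1}: for $x\in S$, taking $y=x$ in the supremum defining $\nu_x$ gives $\nu_x(A)\ge\int_A T(x)\,d\mu$, while the $\mu$-Lipschitz hypothesis yields $\int_A T(y)\,d\mu - K\mu(A)d(x,y)\le \int_A T(x)\,d\mu$ for every $y\in S$, so the reverse inequality also holds. Hence $\int_A\hat T(x)\,d\mu=\int_A T(x)\,d\mu$ for all $A\in\Sigma$, i.e., $\hat T(x)=T(x)$ $\mu$-a.e.

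For the $Y$-Lipschitz estimate of $\hat T$, fix $x,y\in M$ and $A\in\Sigma$. Using the K\"othe dual representation of the norm together with the density of simple functions in $Y'(\mu)$,
$$
\|(\hat T(x)-\hat T(y))\chi_A\|_{Y(\mu)}=\sup\Big\{\Big|\sum_i\alpha_i(\nu_x(A_i)-\nu_y(A_i))\Big|:\sum_i\alpha_i\chi_{A_i}\in B_{Y'(\mu)},\ A_i\subseteq A\text{ disjoint}\Big\}.
$$
The same sup-sup trick already used in the proof of Proposition \ref{medL1}(2) gives $|\nu_x(B)-\nu_y(B)|\le K\mu(B)d(x,y)$ for every $B\in\Sigma$, so the previous expression is bounded by
$$
K\,d(x,y)\sum_i|\alpha_i|\mu(A_i)=K\,d(x,y)\int_A |g|\,d\mu,\quad g:=\sum_i\alpha_i\chi_{A_i}.
$$
A final H\"older step in $Y(\mu)$, $\int_A|g|\,d\mu\le\|g\|_{Y'(\mu)}\|\chi_A\|_{Y(\mu)}$, closes the estimate at $K\|\chi_A\|_{Y(\mu)}d(x,y)$, which is exactly the $Y$-Lipschitz inequality with constant $K$.

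The hard part is the first paragraph: promoting the Radon-Nikodym density $h_x$ from $L^1(\mu)$ to $Y(\mu)$ with norm controlled by $|\nu_x|_Y$. Order continuity of $Y(\mu)$ (so that $Y(\mu)^*=Y'(\mu)$) and density of simple functions in $Y'(\mu)$ are precisely the ingredients that let K\"othe duality realise this promotion via the truncation-plus-Fatou argument; once $\hat T(x)\in Y(\mu)$ is secured, the remaining steps are straightforward adaptations of the $L^1$ case already treated in Proposition \ref{medL1}.
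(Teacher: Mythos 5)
Your proposal is correct and follows essentially the same route as the paper's proof: Radon--Nikodym density $h_x$, promotion of $h_x$ to $Y(\mu)$ via the finite $Y$-variation together with order continuity and the density of simple functions in $Y'(\mu)$, the extension property checked as in Proposition~\ref{medL1}, and the $Y$-Lipschitz bound obtained through K\"othe duality over simple functions, the sup--sup inequality $|\nu_x(B)-\nu_y(B)|\le K\mu(B)\,d(x,y)$, and a final H\"older step. Your truncation-plus-Fatou argument is just a more explicit rendering of the paper's one-line assertion that $\sup_{g\in B_{Y'}}\big|\int h_x g\,d\mu\big|<\infty$ places $h_x$ in $Y(\mu)$ with $\|h_x\|_{Y(\mu)}=|\nu_x|_Y$, so the two proofs coincide in substance.
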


\begin{proof}
 By the Radon-Nikodym theorem, there is an integrable function $h_{x}$ such that $\nu_x(A)= \int_A h_{x} d \mu$ for every $A \in \Sigma.$ Since  $\nu_x$ has finite $Y$-variation and taking into account that
$Y(\mu)$ is order continuous and simple functions are dense in the dual,
we have that
$$
|\nu_x|_{Y}=\sup_{\sum_{i=1}^n \alpha_i \chi_{A_i}  \in B_{Y'}}\Big| \sum_{i=1}^n \alpha_i \nu_x(A_i) \Big|
= \sup_{g \in B_{Y'}} | \int h_x g d \mu |  < \infty.
$$
Therefore, $h_x \in Y(\mu)$ and $\|h_x\|_{Y(\mu)}= |\nu_x|_{Y}$.  Since $\mu$ is a finite measure, $Y(\mu)$ is contained in $ L^1(\mu)$. Then, as $T:S\to\L^1(\mu)$ is $\mu$-Lipschitz with constant $K$, we can apply Proposition \ref{medL1} (1) (taking $\phi(A)=K\mu(A)$, $A\in \Sigma$)  and consider the extension $\hat T(x):= h_x$ for every $x \in M$ given in its proof. Let us prove that $\hat T$  is $Y$-Lipschitz with constant $K.$ Let $x,y \in M.$ Then
$$
\|(T(x) - T(y)) \chi_A\|_{Y(\mu)} = \sup_{g \in B_{Y'}} | \int_A (h_x -h_y) g d \mu |
$$
\begin{equation*} \label{eqYLip}
= \sup_{g=\sum_{i=1}^n \alpha_i \chi_{A_i }  \in B_{Y'}}\Big| \sum_{i=1}^n \alpha_i \big( \nu_x(A_i \cap A) - \nu_y(A_i\cap A) \big) \Big|.
\end{equation*}

If we fix  a norm one function $g=\sum_{i=1}^n \alpha_i \chi_{A_i}  \in B_{Y'},$ we have that
$$
\sup_{z\in S}\left(\int_{A_i\cap A} \hspace{-0.5cm}T(z)\, d\mu-K \mu(A_i\cap A) d(x,y)\right)-\sup_{v\in S}\left(\int_{A_i\cap A}\hspace{-0.5cm} T(v)\, d\mu-K \mu(A_i\cap A) d(y,v)\right)
$$
$$
\leq \sup_{w\in S} K\mu(A_i\cap A)(d(y,w)-d(x,w))\leq K\mu(A_i\cap A)d(x,y).
$$
Then, interchanging the role of $x$ and $y$ we get 
$$
\sup_{z\in S}\left(\int_{A_i\cap A} \hspace{-0.5cm}T(z)\, d\mu-K \mu(A_i\cap A) d(x,y)\right)-\sup_{v\in S}\left(\int_{A_i\cap A}\hspace{-0.5cm} T(v)\, d\mu-K \mu(A_i\cap A) d(y,v)\right)
$$
$$
\leq K\mu(A_i\cap A)d(x,y).
$$
Hence, 
$$
\Big| \sum_{i=1}^n \alpha_i \big( \nu_x(A_i \cap A) - \nu_y(A_i\cap A) \big) \Big|\leq 
 \sum_{i=1}^n |\alpha_i | \Big| \sup_{z \in S} \Big\{ \int_{A_i\cap A} T(z) d \mu - K \mu(A_i\cap A) d(x,z) \Big\}
$$
$$
 -
 \sup_{v \in S} \Big\{ \int_{A_i \cap A} T(v) d \mu - K \mu(A_i\cap A) d(y,v) \Big\} \Big|
$$
$$
\leq  \sum_{i=1}^n | \alpha_i | K
\mu(A_i\cap A) d(x,y)
= \|g \chi_A\|_{L^1(\mu)} K d(x,y)
$$
$$
 \hspace{-1cm}\le \|g\|_{Y'(\mu)} \|\chi_A\|_Y  K d(x,y) \le   K \|\chi_A\|_Y  d(x,y).
$$
Therefore, the extension given is $Y$-Lipschitz with constant $K$.

\end{proof}

%
%
%

\begin{remark}
Some information about the converse can  be also given in this case. Let us show that, \textit{ if $T:M \to Y(\mu)$ is a $Y$-Lipschitz map with constant $K$, then the set function $\nu_x$ given by
 $$
 \nu_x(A) = \sup_{y \in M} \Big\{ \int_A T(y) d \mu - K \|\chi_A\|_Y d(x,y) \Big\}, \quad A \in \Sigma,
 $$
 for every $x \in M$
 is a countably additive measure  with finite $Y$-variation.}  Assume for the aim of simplicity that $\|\chi_\Omega\|_{Y'} \le 1.$ Arguing as in the proof of Proposition \ref{medL1}, suppose that $T:M \to Y(\mu)$ is a $Y$-Lipschitz map with constant $K$.
Fix $x \in M.$ Then we clearly have that for $A \in \Sigma,$
$\nu_x(A)  \ge  \int_A T(x) d \mu.$
Taking into account that we are assuming that $\|\chi_\Omega\|_{Y'} \le 1,$ for every $y \in M$ we also have  that
$$
\Big| \int_A (T(y) - T(x)) \chi_\Omega d \mu \Big| \le \|(T(x)-T(y)) \chi_A\|_{Y} \le  K \|\chi_A\|_Y d(x,y),
$$
and so $\int_A T(y) d \mu - \int_A T(x) d \mu \le  K \|\chi_A\|_Y d(x,y).$
Therefore, we also obtain in this case that
$$
\nu_x(A)  =  \sup_{y \in M} \Big\{ \int_A T(y) d \mu - K \|\chi_A\|_Y  d(x,y) \Big\} \le \int_A T(x) d \mu.
$$
Consequently, $\nu_x(A)= \int_A T(x) d \mu$ for all $A \in \Sigma$. Since by hypothesis $T(x) \in Y(\mu) \subseteq  L^1(\mu)$, we have indeed that $\nu_x$ is a
countably additive measure. Finally, note that we have that
$$
| \nu_x | \le \sup_{g \in B_{Y'}} \int T(x) g d\mu \le \|T(x)\|_{Y(\mu)} < \infty,
$$
and so for all $x \in M,$ $\nu_x$ has finite $Y$-variation.

\end{remark}

\begin{corollary} 
Let $S$ be a subset of a metric space $(M,d)$   and let $\mu$ be a finite measure. 
Let $Y(\mu)$ be an order continuous Banach function space such that simple functions are dense in its dual. Let $T: S \to Y(\mu)$ be a $\mu$-Lipschitz map with constant $K,$ and suppose that
for every $x \in M,$  the set function  $\nu_x: \Sigma \to \mathbb R$ given by
$$
\nu_x(A):= \sup_{y \in S} \Big\{ \int_{A} T(y) d \mu - K \mu(A) d(x,y) \Big\},
$$
is a $\mu$-continuous  (countably additive) measure with finite $Y$-variation.

 Then $T$ can be extended to $M$ as a $K'$-Lipschitz map with constant $K'\leq K \| \chi_\Omega\|_Y$, and the extension $\hat T$ factors through an inclusion/quotient $1$-Lipschitz map $j$ and a $K \| \chi_\Omega\|_Y$-Lipschitz map $\overline{T}:M_{{\hat T}} \to Y(\mu) $ as
\[
\xymatrix{S \hookrightarrow
M \,\,\,\, \ar[rr]^{ \hat T} \ar@{->}[dr]_{j} & & Y(\mu),\\
& {M_{\hat T}} \ar[ur]_{\overline{T}} & }
\]
where $(M_{\hat T}, \overline d_{\hat T})$ is a complete  metric  space in which $j(M)$ is dense, and $\overline d_{\hat T}$ is the metric associated to the pseudo metric
$$
d_{\hat T}(x,y):= \frac{1}{K \| \chi_\Omega\|_Y} \, \big\| {\hat T}(x) - {\hat T}(y)\big\|_{Y(\mu)}, \quad x,y \in M.
$$
Moreover, this extension/factorization is optimal in the sense of Theorem  \ref{lem2}.

\end{corollary}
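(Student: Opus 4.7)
The plan is to assemble the statement from three building blocks already established in the paper: Theorem \ref{propmu} (for the existence of the $Y$-Lipschitz extension), an elementary observation that converts the $Y$-Lipschitz inequality into an ordinary metric-Lipschitz inequality, and finally Lemma \ref{lem1} together with Theorem \ref{lem2} (for the maximal factorization through $M_{\hat T}$). No new measure-theoretic work is needed.

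First, I will invoke Theorem \ref{propmu} directly on the map $T:S\to Y(\mu)$: the hypotheses of that theorem are exactly the ones listed in the corollary ($\mu$ finite, $Y(\mu)$ order continuous with simple functions dense in $Y(\mu)'$, $T$ $\mu$-Lipschitz with constant $K$, and each $\nu_x$ a $\mu$-continuous measure with finite $Y$-variation). This yields a $Y$-Lipschitz extension $\hat T:M\to Y(\mu)$ with constant $K$, so that
\[
\|(\hat T(x)-\hat T(y))\chi_A\|_{Y(\mu)}\le K\,\|\chi_A\|_{Y}\,d(x,y),\qquad x,y\in M,\ A\in\Sigma.
\]

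Next, I specialize this inequality to $A=\Omega$. Since $\chi_\Omega\cdot(\hat T(x)-\hat T(y))=\hat T(x)-\hat T(y)$ in $Y(\mu)$, the previous bound becomes
\[
\|\hat T(x)-\hat T(y)\|_{Y(\mu)}\le K\,\|\chi_\Omega\|_{Y}\,d(x,y),\qquad x,y\in M,
\]
which says that $\hat T$ is a $K'$-Lipschitz map from the metric space $(M,d)$ into the Banach (hence complete metric) space $Y(\mu)$, with $K'\le K\|\chi_\Omega\|_{Y}$.

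Finally, I apply Lemma \ref{lem1} with $(E,\rho)=(Y(\mu),\|\cdot\|_{Y(\mu)})$ and Lipschitz constant $K\|\chi_\Omega\|_Y$ to the extended map $\hat T$. The pseudo metric produced by the lemma is precisely
\[
d_{\hat T}(x,y)=\frac{1}{K\|\chi_\Omega\|_Y}\,\|\hat T(x)-\hat T(y)\|_{Y(\mu)},
\]
whose associated completed metric space is the $(M_{\hat T},\overline d_{\hat T})$ of the statement; the lemma then supplies the $1$-Lipschitz inclusion/quotient map $j$ and the $K\|\chi_\Omega\|_Y$-Lipschitz map $\overline T$ with $\hat T=\overline T\circ j$. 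Optimality of the factorization is then a direct appeal to Theorem \ref{lem2}, whose hypotheses are met because $j(M)$ is dense in $M_{\hat T}$ by construction and $Y(\mu)$ is complete.

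Since all three ingredients are already proven, I do not anticipate a real obstacle; the only point requiring slight care is the compatibility of constants, namely checking that the rescaling factor $1/(K\|\chi_\Omega\|_Y)$ in the definition of $d_{\hat T}$ matches the Lipschitz constant used when applying Lemma \ref{lem1}, so that $\overline T$ comes out with exactly the claimed Lipschitz constant and not a larger one.
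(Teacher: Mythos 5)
Your proposal is correct and follows essentially the same route as the paper: apply Theorem \ref{propmu} to get the $Y$-Lipschitz extension $\hat T$ with constant $K$, observe (by taking $A=\Omega$) that $\hat T$ is then $K\|\chi_\Omega\|_{Y}$-Lipschitz in the ordinary sense, and conclude via Lemma \ref{lem1} and Theorem \ref{lem2}. The only difference is that you make explicit the ``easy to see'' step converting the $Y$-Lipschitz inequality into the classical Lipschitz bound, which the paper leaves to the reader.
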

\begin{proof}
If $T: S\to Y(\mu)$  is a $\mu$-Lipschitz  map with constant $K$,  by Theorem \ref{propmu} we obtain
an extension $\hat T:M\to Y(\mu)$ of $T$ that is  $Y$-Lipschitz with constant $K$.
Then, it is easy to see that $\hat T$ is also $K \| \chi_\Omega\|_Y$-Lipschitz.
Lemma \ref{lem1} and Theorem \ref{lem2} give the result.

\end{proof}

\section{ Interpolation tools for obtaining Lipschitz-type inequalities for maps on Banach function spaces}  \label{S5}

To finish the paper, in this section we provide some interpolation tools for generalizing the Lipschitz-type inequalities that have been studied in the previous sections, in order to apply them in more general contexts. For example, as we have seen, some results hold  for operators on $L^\infty$ or $L^1$. Interpolated inequalities for interpolation spaces of these ``extreme cases" are easy to be obtained, as we will show in what follows. We will consider two cases: the lattice interpolation method ---that under some mild requirements coincides in the case of Banach function spaces with the complex interpolation method---, and the real interpolation of function lattices.

Through this section we will consider a finite measure space $(\Omega, \Sigma, \mu)$ and
two bounded functions $\phi_i:\Sigma\to \mathbb R^{+},$  $i=0,1$. 

\subsection{Calder\'on-Lozanowskii interpolation of function lattices and Lipschitz-type maps}
Let us recall the  lattice interpolation construction. Let $Y_0(\mu)$ and $Y_1(\mu)$ be Banach function spaces in $L^0(\mu).$ Let $ 0 \le \theta \le 1.$  We define the set 
$$
Y_0(\mu)^{1-\theta} \, Y_1(\mu)^\theta
$$
$$
 :=\{x \in L^0(\mu): \text{there are functions} \, x_0 \in Y_0, \, x_1 \in Y_1 \, \text{such that} \, |x| \le |x_0|^{1- \theta} |x_1|^\theta \}.
$$
It is a linear space, that is complete with the norm
$$
\|x\|_{Y_0^{1-\theta} Y_1^\theta} := \inf \, \|x_0\|^{1- \theta} \, \|x_1\|^\theta,
$$
where the  infimum is computed over all dominations like the one above. In fact, it is a Banach function space over $\mu$ that clearly contains $Y_0(\mu) \cap Y_1(\mu) .$

For the following result, consider  the inclusion maps $ I_\theta: Y_0(\mu) \cap Y_1(\mu) \hookrightarrow Y_0^{1-\theta} Y_1^\theta,$ for $\theta \in [0,1].$

\begin{proposition}\label{propint}
	Let $Y_0(\mu)$ and $Y_1(\mu)$ be Banach function spaces and let $M$ be a metric space. Consider
a map $T:M \to Y_0(\mu) \cap Y_1(\mu)$ such that  there are constants $K_0, K_1 >0$ such that  the maps $T_i:=I_i \circ T:M\to Y_i(\mu),$ $i=0,1,$
satisfy the Lipschits inequalities
$$
\|T_i(x)-T_i(y)\|_{Y_i(\mu)}\leq K_i \, d(x,y), \quad x,y\in M.
$$

Then, for each $\theta\in (0,1)$ the map $T_{\theta}:M\to Y_0^{1-\theta}Y_1^{\theta}$ given by $T_{\theta}:=  I_\theta \circ T$ is well defined and satisfies
	\[
	\|T_{\theta}(x)-T_{\theta}(y)\|_{{Y_0^{1-\theta} Y_1^\theta}}\leq K_0^{1-\theta} K_{1}^{\theta}  \, d(x,y), \quad x,y \in M.
	\]
\end{proposition}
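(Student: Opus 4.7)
The plan is to exploit the very definition of the Calder\'on--Lozanowskii norm by producing, for each pair $x,y\in M$, an admissible factorization of $T(x)-T(y)$ that uses $T(x)-T(y)$ itself on both sides.

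First I would verify that $T_\theta$ is well defined. Since $Y_0(\mu)\cap Y_1(\mu)\subseteq Y_0^{1-\theta}Y_1^\theta$ (take $x_0=x_1=x$ in the definition of the Calder\'on--Lozanowskii space), the composition $I_\theta\circ T$ makes sense as a map into $Y_0^{1-\theta}Y_1^\theta$.

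The key step is to estimate $\|T(x)-T(y)\|_{Y_0^{1-\theta}Y_1^\theta}$. For fixed $x,y\in M$, set $f:=T(x)-T(y)\in Y_0(\mu)\cap Y_1(\mu)$ and choose as the admissible pair $x_0:=f\in Y_0(\mu)$ and $x_1:=f\in Y_1(\mu)$. Then pointwise
\[
|x_0|^{1-\theta}|x_1|^\theta = |f|^{1-\theta}|f|^\theta = |f|,
\]
so the domination $|f|\le |x_0|^{1-\theta}|x_1|^\theta$ holds (with equality). By the definition of the interpolation norm as an infimum,
\[
\|T(x)-T(y)\|_{Y_0^{1-\theta}Y_1^\theta}\le \|f\|_{Y_0(\mu)}^{1-\theta}\,\|f\|_{Y_1(\mu)}^{\theta}.
\]

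Finally, I would substitute the two Lipschitz estimates $\|T_i(x)-T_i(y)\|_{Y_i(\mu)}\le K_i\, d(x,y)$ for $i=0,1$ to obtain
\[
\|T_\theta(x)-T_\theta(y)\|_{Y_0^{1-\theta}Y_1^\theta}\le \bigl(K_0 d(x,y)\bigr)^{1-\theta}\bigl(K_1 d(x,y)\bigr)^{\theta}=K_0^{1-\theta}K_1^{\theta}\,d(x,y),
\]
which is the desired inequality. There is essentially no serious obstacle here: once one notices that the diagonal choice $x_0=x_1=f$ is admissible in the Calder\'on--Lozanowskii construction, the whole argument reduces to a one-line application of the infimum definition together with the hypothesis. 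The only point worth double-checking is that no measurability or integrability issue arises when we use $f$ simultaneously as an element of $Y_0(\mu)$ and $Y_1(\mu)$, which is immediate because $T$ is assumed to take values in the intersection $Y_0(\mu)\cap Y_1(\mu)$.
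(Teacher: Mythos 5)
Your proof is correct and is essentially the same as the paper's: the paper also uses the pointwise identity $|T_\theta(x)-T_\theta(y)| = |T_0(x)-T_0(y)|^{1-\theta}|T_1(x)-T_1(y)|^{\theta}$ (i.e.\ your diagonal choice $x_0=x_1=f$) and then bounds the Calder\'on--Lozanowskii norm by $\|T_0(x)-T_0(y)\|_{Y_0}^{1-\theta}\|T_1(x)-T_1(y)\|_{Y_1}^{\theta}\le K_0^{1-\theta}K_1^{\theta}\,d(x,y)$. No gaps.
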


\begin{proof}

The proof is given by a simple computation. All the maps involved are clearly defined. Thus, taking into 
account that the elements $T(x)$ are defined as functions of $L^0(\mu)$ and so $T_0(x)=T_\theta(x)=T_1(x) \in L^0(\mu)$ for every $x \in M,$ we have that for $x,y \in M,$
$$
|T_{\theta}(x)-T_{\theta}(y)| 
= |T_0(x)-T_0(y)|^{1-\theta}|T_1(x)-T_1(y)|^{\theta},
$$
	and therefore
	\[
	\|T_{\theta}(x)-T_{\theta}(y)\|_{{Y_0^{1-\theta} Y_1^\theta}}\leq \|T_0(x)-T_0(y)\|_{Y_0(\mu)}^{1-\theta}\|T_1(x)-T_1(y)\|_{Y_1(\mu)}^{\theta}\leq K_0^{1-\theta} K_{1}^{\theta} \, d(x,y).
	\]	
\end{proof}

\begin{remark}
	The same computations as above provide also the following results for some of the cases that we have studied in the present paper. In particular, using the same notation that in Proposition \ref{propint}, we obtain
	\begin{itemize} 
	\item[(a)] For $\phi$-Lipschitz maps. Assume that  for $i=0,1,$ $\phi_i$ are increasing set functions such that 
	\[
	\|(T_i(x)-T_i(y))\chi_A\|_{Y_i(\mu)}\leq \phi_i(A) \, d(x,y) \quad A\in \Sigma, \,\, x,y\in M. 
	\]
	In this case, it can be easily seen that the map $T_{\theta}$ satisfies
	\[
	\|(T_{\theta}(x)-T_{\theta}(y))\chi_A\|_{{Y_0^{1-\theta} Y_1^\theta}}\leq  \phi_0(A)^{1-\theta}\phi_{1}(A)^{\theta} \, d(x,y).
	\]
	
	\item[(b)] For  the case of  pointwise $K$-Lipschitz $\mu$-a.e. maps we get a similar result. Assume that the following inequalities hold for $i=0,1,$
	\[
	|T_i(x)-T_i(y)|\leq K_i \, d(x,y) \quad \text{$\mu$-a.e.}, \quad x,y\in M.
	\]
	Then it can be easily seen that  the mapping $T_{\theta}$ satisfies
	\[
	|T_{\theta}(x)-T_{\theta}(y)|\leq  K_0^{1-\theta} K_1^{\theta}d(x,y)  \quad \text{$\mu$-a.e.}, \quad x,y\in M.
	\]
\end{itemize}
\end{remark}

\subsection{Real interpolation and Lipschitz-type inequalities} Some interpolation-type inequalities can also be obtained for our class of maps by using  real interpolation, concretely, the  $K$-functional. We consider here the case when $M$ is also a Banach space with the metric associated to its norm, in order to provide some results on $\phi$-Lipschitz maps in the Banach space setting.
	Let $\overline E=(E_0,E_1)$ be an interpolation couple of Banach spaces. For $t>0$, we consider the functional
	\[
	K(t,a,E_0,E_1) = \inf\{\|a_0\|_0+t\|a_1\|_1:a =a_0+a_1,\ a_0\in E_0,\ a_1\in E_1\}, \quad a \in E_0+E_1.
	\]
	Let $\theta\in (0,1)$ and $p\in [1,\infty)$. The interpolation space $\overline E_{\theta,p}$ is defined to be the set of all  $a\in E_0+E_1$ satisfying 
	\[
	\left(\int_{0}^{\infty}(t^{-\theta}K(t,a,E_0,E_1))^{p}\frac{dt}{t}\right)^{\frac{1}{p}}<\infty.
	\]
	The norm on $\overline E_{\theta,p}$ is given by
	\[
	\|a\|_{\theta,p}=	\left(\int_{0}^{\infty}(t^{-\theta}K(t,a,E_0,E_1))^{p}\frac{dt}{t}\right)^{\frac{1}{p}}.
	\]

Under some Lipschitz-type requirements for a map $T:E_1 \to Y_1(\mu),$ we obtain the following 
result, that provides a $\phi$-Lipschitz type inequality at least when $T$ is linear.

\begin{proposition}
	Let $E_0,E_1$ Banach spaces and $Y_0(\mu)$ and $Y_1(\mu)$ Banach function spaces such that $E_0\subset E_1$. Assume that the following  $\overline E =(E_0,E_1)$ and $\overline Y = (Y_0(\mu),Y_1(\mu))$ are Banach couples. Let $\phi_i: \Sigma \to \mathbb R^+$ be increasing set functions, $i=0,1.$ 
Suppose that $T:E_1\to Y_1(\mu)$ is a map satisfying the following properties. 

\begin{itemize}
	
\item[(a) ] $T(E_0)\subset Y_0$ and $\|T(x)\chi_A\|_{Y_0}\leq \phi_0(A)\|x\|_{E_0}$ for all $A\in \Sigma$ and $x\in E_0$, and

\item[(b)] $\|(T(x)-T(y))\chi_A\|_{Y_1}\leq \phi_1(A)\|x-y\|_{E_1}$ for all $A\in \Sigma$ and $x,y\in E_1$.

\end{itemize} 
	
Let $\theta\in (0,1)$ and $p\in [0,\infty]$.	Then $T:\overline E_{\theta, p}\to \overline{Y}_{\theta,p}$  is a well-defined map, and
	\[
	\|T(x)\chi_A\|_{\overline Y_{\theta,p}}\leq\phi_0(A)^{1-\theta}\phi_1(A)^{\theta}\|x\|_{\overline E_{\theta,p}} \quad x\in \overline E_{\theta,p} \text{ and }A\in \Sigma.
	\]
\end{proposition}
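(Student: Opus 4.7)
The plan is to estimate the $K$-functional of $T(x)\chi_A$ in the couple $(Y_0(\mu),Y_1(\mu))$ in terms of the $K$-functional of $x$ in $(E_0,E_1)$, with an appropriate rescaling of the parameter and a multiplicative factor depending on $\phi_0(A)$ and $\phi_1(A)$. Once this key inequality is in hand, plugging it into the integral defining $\|\cdot\|_{\overline{Y}_{\theta,p}}$ and performing an elementary change of variable will deliver the desired bound.

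The crucial step, which circumvents the lack of linearity of $T$, is to observe that for every decomposition $x=x_0+x_1$ with $x_0\in E_0$ and $x_1\in E_1$ one has the identity
\[
T(x)\chi_A \;=\; T(x_0)\chi_A \;+\; \bigl(T(x)-T(x_0)\bigr)\chi_A,
\]
which is a valid decomposition in $Y_0(\mu)+Y_1(\mu)$. Indeed, hypothesis (a) bounds the first summand by $\phi_0(A)\|x_0\|_{E_0}$ in the $Y_0$-norm, whereas hypothesis (b), applied to the points $x,x_0\in E_1$, bounds the second summand by $\phi_1(A)\|x-x_0\|_{E_1}=\phi_1(A)\|x_1\|_{E_1}$ in the $Y_1$-norm. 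Consequently, for every $s>0$,
\[
K\bigl(s,\,T(x)\chi_A,\,Y_0,Y_1\bigr) \;\le\; \phi_0(A)\|x_0\|_{E_0}+s\,\phi_1(A)\|x_1\|_{E_1},
\]
and taking the infimum over decompositions of $x$ yields
\[
K\bigl(s,\,T(x)\chi_A,\,Y_0,Y_1\bigr) \;\le\; \phi_0(A)\, K\!\left(\tfrac{s\,\phi_1(A)}{\phi_0(A)},\,x,\,E_0,E_1\right).
\]

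Next I would substitute this estimate into the defining integral of $\|T(x)\chi_A\|_{\overline{Y}_{\theta,p}}^{\,p}$ and perform the change of variable $u=s\phi_1(A)/\phi_0(A)$; using $ds/s=du/u$ together with $s^{-\theta}=(\phi_1(A)/\phi_0(A))^{\theta}u^{-\theta}$, the scalar factors combine into $\phi_0(A)^{p(1-\theta)}\phi_1(A)^{p\theta}$ and the remaining integral equals $\|x\|_{\overline{E}_{\theta,p}}^{\,p}$. Extracting the $p$-th root gives the asserted inequality; the case $p=\infty$ is handled analogously by replacing the integral by a supremum. Well-definedness of $T:\overline{E}_{\theta,p}\to\overline{Y}_{\theta,p}$ then follows from the inequality specialized to $A=\Omega$, combined with the inclusion $\overline{E}_{\theta,p}\subset E_0+E_1=E_1$ that comes from $E_0\subset E_1$ and guarantees that $T(x)$ is defined in $Y_1(\mu)$.

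The main (essentially only) obstacle is the non-linearity of $T$: it rules out the standard trick of transporting a decomposition $x=x_0+x_1$ to a decomposition $T(x)=T(x_0)+T(x_1)$. The fix is to apply the difference hypothesis (b) to the error $T(x)-T(x_0)$, which produces the right $Y_1$-decomposition at the cost of replacing $\|x_1\|_{E_1}$ by the equivalent quantity $\|x-x_0\|_{E_1}$. A minor issue to mention is the degenerate case $\phi_0(A)=0$ or $\phi_1(A)=0$, which is treated either by a direct inspection (one of the summands vanishes identically) or by continuity in the rescaled $K$-functional estimate.
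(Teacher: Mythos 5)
Your proof is correct and follows essentially the same route as the paper: the same decomposition $T(x)\chi_A = T(x_0)\chi_A + (T(x)-T(x_0))\chi_A$, the same $K$-functional estimate $K(t,T(x)\chi_A;Y_0,Y_1)\le \phi_0(A)\,K\bigl(t\phi_1(A)/\phi_0(A),x;E_0,E_1\bigr)$ obtained from hypotheses (a) and (b), and the same rescaling to pass to the $(\theta,p)$-norms. The only differences are cosmetic (you take the infimum directly rather than an $\varepsilon$-almost-optimal decomposition, and you explicitly mention the $p=\infty$ and degenerate $\phi_i(A)=0$ cases, which the paper leaves implicit).
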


\begin{proof}
	Given $x\in E_1 = E_0+E_1,$ $t>0$ and $\varepsilon>0$, we choose $x_i\in E_i$ with $i=0,1$ such that
	\[
	\|x_0\|_{E_0}+t\frac{\phi_1(A)}{\phi_0(A)}\|x_1\|_{E_1}\leq (1+\varepsilon)K\left(t\frac{\phi_1(A)}{\phi_0(A)},x;E_0,E_1\right).
	\]
	Writing $T(x)\chi_A = T(x_0)\chi_A+(T(x)-T(x_0))\chi_A$, we have that
	\begin{eqnarray*}
		K\left(t,T(x)\chi_A;Y_0,Y_1\right) &\leq& \|T(x_0)\chi_A\|_{Y_0}+t\|(T(x)-T(x_0))\chi_A\|_{Y_1}\\
		&\leq& \phi_0(A)\|x_0\|_{E_0}+t\phi_1(A)\|x-x_0\|_{E_1}\\
		&\leq& \phi_0(A)(1+\varepsilon)K\left(t\frac{\phi_1(A)}{\phi_0(A)},x;E_0,E_1\right).
	\end{eqnarray*}
	Therefore,
	\[
	K\left(t,T(x)\chi_A;Y_0,Y_1\right) \leq \phi_0(A)K\left(t\frac{\phi_1(A)}{\phi_0(A)},x;E_0,E_1\right).
	\]
	Besides that,
	\[
	t^{-\theta}K\left(t,T(x)\chi_A;Y_0,Y_1\right)\leq \phi_0(A)^{1-\theta}\phi_1(A)^{\theta}\left(t\frac{\phi_1(A)}{\phi_0(A)}\right)^{-\theta}K\left(t\frac{\phi_1(A)}{\phi_0(A)},x;E_0,E_1\right).
	\]
	Consequently
	\[
	\|T(x)\chi_A\|_{\overline Y_{\theta,p}}\leq \phi_0(A)^{1-\theta}\phi_1(A)^{\theta}\|x\|_{\overline E_{\theta,p}}.
	\]
\end{proof}

\end{document}